\documentclass[preprint,12pt]{elsarticle}
 
\usepackage{etoolbox}
\makeatletter
\def\ps@pprintTitle{%
	\let\@oddhead\@empty
	\let\@evenhead\@empty
	\def\@oddfoot{\reset@font\hfil\thepage\hfil}
	\let\@evenfoot\@oddfoot
}
\makeatother
\usepackage{natbib}
\usepackage{bm}
\usepackage{float}
\usepackage{graphics}
\usepackage{amsmath, amsthm, amssymb}
\usepackage{caption}
\usepackage{xcolor}
\usepackage{hyperref}

\hypersetup{
    colorlinks=true,
    linkcolor=blue
}
\usepackage{lscape}
\usepackage{float}
\usepackage[utf8]{inputenc}
\usepackage{amsthm}

\usepackage{enumerate}
\usepackage{mathrsfs}
\usepackage{placeins}
\usepackage[inner=4cm,outer=2cm]{geometry}
\usepackage{graphicx}
\usepackage{amsfonts}
\usepackage{verbatim}
\usepackage{amssymb}
\usepackage{amsthm,multirow}
\usepackage{orcidlink}

\newtheorem{thm}{Theorem}[section]
\newtheorem{lem}{Lemma}[section]

\newtheorem{prop}{Proposition}[section]

\theoremstyle{definition}
\newtheorem{defn}{Definition}[section]
\newtheorem{ex}{Example}[section]
\newtheorem{res}{Result}[section]

\theoremstyle{remark}
\newtheorem{rem}{Remark}[section]

\theoremstyle{properties}

\theoremstyle{definition}

\numberwithin{equation}{section}

\usepackage[mathscr]{euscript}
\usepackage{geometry} 
\usepackage{graphicx,lscape} 
\usepackage{booktabs} 
\usepackage{bigstrut}
\usepackage{array}
\usepackage{paralist} 
\usepackage{verbatim}
\usepackage{subfig} 
\biboptions{comma,round,authoryear}
\begin{document}
\begin{frontmatter}
	\title{\textbf{Fractional Cumulative Past Inaccuracy \\in the Quantile Framework and its Applications}}
	\author{Iona Ann Sebastian \orcidlink{0009-0000-7640-2215}\corref{cor1}}
    \author{S. M. Sunoj \orcidlink{0000-0002-6227-1506}}
	\ead{ionaann99@gmail.com, smsunoj@cusat.ac.in}
	\cortext[cor1]{Corresponding author}

	\address{Department of Statistics\\Cochin University of Science and Technology\\Cochin 682 022, Kerala, INDIA}

	\begin{abstract}
Fraction-based information measures have received considerable attention for describing complex systems, as they enable the investigation of signals with high sensitivity \citep{machado2014fractional}. In this paper, we introduce quantile versions of fractional cumulative past inaccuracy (FCPI) and dynamic fractional cumulative past inaccuracy (DFCPI) measures based on the inverse Mittag-Leffler function (MLF) or fractional logarithm function, which are the extensions of the fractional cumulative past and dynamic fractional cumulative past entropies, respectively. The various properties of quantile-based FCPI and its dynamic version are provided. We also propose a
nonparametric estimator for the proposed measure, and simulation studies are
carried out for validation. Finally, we bring out real data application of the newly introduced quantile-based FCPI.
	\end{abstract}
	
	\begin{keyword}
	Fractional cumulative inaccuracy \sep quantile function \sep  hazard rate \sep nonparametric estimation.
	
	\MSC[2020] 94A17
	\end{keyword}

\end{frontmatter}

\section{Introduction}

Entropy and inaccuracy measures provide fundamental tools for quantifying uncertainty and discrepancy between probability distributions and have found wide applications in information theory, probability, statistics, reliability, and survival analysis. The classical Shannon entropy is based on the logarithmic information function and, for a discrete random variable (rv) $X$ with probability mass function $\{p_i\}_{i=1}^n$, is given by $H(X)=-\sum_{i=1}^{n}p_i\log p_i$.  The need for greater flexibility in measuring uncertainty has led to several generalized entropy families, in which an additional parameter controls the contribution of different regions of the underlying probability distribution. A further generalization arises from fractional calculus, where the order of the underlying operator is allowed to be non-integer.  In fractional calculus, ordinary derivatives such as $\frac{d}{dx}$ are generalized to derivatives of non-integer order, $\frac{d^{\alpha}}{dx^{\alpha}}$, where $\alpha$ is the fractional order.  This idea has led researchers to construct entropy-like quantities involving fractional derivatives or fractional integrals.  For a discrete rv $X$, the fraction entropy of order $\alpha$, is given by $S_{\alpha}(P) = \sum_{i=1}^{n}{p_i \left(-\log p_i\right)^{\alpha}}, \; 0 < \alpha < 1$, developed based on the fractional calculus \citep{ubriaco2009entropies}.  For $0 < \alpha < 1$, small probabilities receive relatively greater weight than they do under larger values of $\alpha$, and hence fractional entropy captures uncertainty contained in the rv more effectively to rare or low-probability events.  This framework has proved useful in describing nonlocality, memory, anomalous diffusion, and complex dynamical phenomena, and has consequently motivated the development of fractional information and entropy measures (see \citet{liang2022computation}).  For some recent works on fractional entropy, their properties and applications, see \cite{di2019past}, \cite{saha2023extended}, and \cite{foroghi2023extensions} and references therein.\\

The Mittag--Leffler function (MLF) \citep{mittag1903nouvelle}, plays a central role in fractional calculus. For $\alpha>0$, the one-parameter MLF is defined by
\[
E_{\alpha}(x)
=
\sum_{k=0}^{\infty}\frac{x^k}{\Gamma(1+\alpha k)},
\]
which reduces to the exponential function when $\alpha=1$, where $\alpha !=\Gamma(1+\alpha)$, and $\Gamma(\cdot)$ denotes complete gamma function. The occurrence of $E_{\alpha}$ in solutions of fractional differential equations provides a natural mathematical basis for introducing fractional-order analogues of the exponential and logarithmic functions. In particular, \citet{jumarie2012derivation} established a connection between fractional information theory and the inverse MLF. If ${Ln}_{\alpha}$ denotes the inverse of $E_{\alpha}$, $i.e., {Ln}_{\alpha} (x) = E_{\alpha}^{-1} (x)$ then ${Ln}_{\alpha}$ may be interpreted as a logarithm of fractional order.  Various properties of the inverse MLF \citep{jumarie2012derivation} include, (i) $Ln_\alpha 1=0$, $Ln_\alpha 0=-\infty$, $Ln_\alpha x <0$, when $x<1$; (ii) $1(Ln_\alpha 1)^\frac{1}{\alpha}=0=0(Ln_\alpha 0)^\frac{1}{\alpha}$; (iii) $\frac{d^\alpha}{dx^\alpha}(Ln_\alpha x)^\frac{1}{\alpha}=\frac{\alpha !}{((1-\alpha)!)^2}\frac{1}{x^\alpha}$; (iv) $Ln_\alpha(x^a)=a^\alpha Ln_\alpha x$; (v) $[Ln_\alpha(mn)]^\frac{1}{\alpha}=[Ln_\alpha(m)]^\frac{1}{\alpha}+[Ln_\alpha(n)]^\frac{1}{\alpha}$. The fractional logarithmic function, denoted by $Ln_\alpha(\cdot)$, has no closed form. \citet{jumarie2012derivation} used this inverse to construct a fractional logarithmic operator and, consequently, generalized entropy measures. In this framework, the classical logarithm is recovered at the limiting order $\alpha=1$, since  ${Ln}_{1}(x) = E_1^{-1}(x) = \log x$.  Thus, the replacement $\log x \longrightarrow {Ln}_{\alpha}(x)$ provides a mathematical connection from classical information measures to fractional-order analogues. \citet{sarumi2020highly} developed highly accurate global Pad\'e approximations for the generalized two-parameter MLF and discussed their use in approximating its inverse.  \\

For a discrete distribution, Jumarie's fractional entropy of order $\alpha$ can be expressed in the form $\widetilde H_{\alpha}(X)
= -\sum_{i=1}^{n} p_i\left[{Ln}_{\alpha}(p_i)\right]^{1/\alpha}, \; 0<\alpha<1$.  However, this formulation need not be non-negative. To overcome this problem, \citet{zhang2020cumulative} introduced a modified fractional entropy based on the inverse MLF, $H_{\alpha}(X) = \sum_{i=1}^{n} p_i \left[-{Ln}_{\alpha}(p_i)\right]^{1/\alpha}$, $0<\alpha<1$, which is non-negative and retains a meaningful fractional-order interpretation. An analgous representation of $H_{\alpha}$  for a continuous rv $X$, with probability density function (PDF) $f(\cdot)$, given by 
\begin{equation}
	H_{\alpha}(X) = \int_{\mathbb{R}} {f(x) \left(-Ln_{\alpha} \left(f(x)\right)\right)^{\frac{1}{\alpha}} dx}, \; 0 < \alpha < 1.
\end{equation}
Motivated by such a construction of the fractional entropy, \cite{saha2023extended} proposed an information measure that can provide better information than the cumulative entropy using the power of the inverse MLF, known as the extended fractional cumulative past entropy (EFCPE). The EFCPE for a rv $X$ is defined as
\begin{equation}\label{EFCPE}
\mathcal{\overline{CE}_\alpha}(X)=\int_{0}^{\infty} F(x)[-Ln_\alpha F(x)]^\frac{1}{\alpha} dx.
\end{equation}
Inaccuracy measures provide a related but distinct perspective by quantifying the discrepancy between two distributions. For two nonnegative and absolutely continuous random variables $X$ and $Y$ with PDF's $f(\cdot)$ and $g(\cdot)$, Kerridge's inaccuracy measure (\citet{kerridge1961inaccuracy}, \citet{nath1968entropy}) is given by, $I(X, Y) = -\int_{0}^{\infty} f(x)\log g(x)\,dx$, with Shannon differential entropy $H(X) = -\int_{0}^{\infty}{f(x) \log f(x) dx}$ obtained as the special case $g=f$. Cumulative versions of inaccuracy measures and their applications are well studied in literature. A complementary development in entropy theory is by representing the entropy through the distribution function (DF) $F(\cdot)$ or survival function (SF) $\bar{F}(\cdot) = 1 - F(\cdot)$ and, consequently, does not require explicit specification or estimation of the density.  Based on this idea, \citet{kharazmi2024fractional} introduced fractional cumulative residual inaccuracy information and related Jensen-type measures, thereby illustrating the application of fractional cumulative inaccuracy measures in reliability, survival analysis, and dynamical systems.  Associated with past lifetime or inactivity information, \citet{saha2025fractional} introduced a fractional cumulative past inaccuracy (FCPI) measure and its dynamic version for lifetime distributions.  Incorporating the inverse MLF, the FCPI of $X$ and $Y$ is defined by
\begin{equation}\label{FCPI}
	\mathcal{CI_\alpha}(X,Y)= \int_{0}^{\infty}{F}(x)[-Ln_{\alpha}(G(x))]^\frac{1}{\alpha} dx.
\end{equation}
When $X$ represents the actual distribution and $Y$ a reference, fitted, or model distribution, FCPI measures the information discrepancy resulting from using $G$ when the actual distribution is $F$ and therefore useful in evaluating the model misspecification or distributional discrepancy.  The potential applications of \eqref{FCPI} are highlighted in various fields such as statistics, information theory, image processing, and machine learning. \\ 

Although many research on the fractional information measures are available in literature, however, these formulations are predominantly in terms of the DF, SF, or PDF. On the other hand, the quantile function (QF), defined by
\begin{equation*}
	Q(u) = \inf \{x|F(x)\geq u\}, \; 0 \leq u \leq 1
\end{equation*}
is an efficient and equivalent alternative (see \cite{gilchrist2000statistical}) in modelling and analysis of statistical data and offers several advantages in modelling and analysis of lifetime data.  Since $Q$ uniquely determines $F$, quantile-based information measures provide an equivalent but potentially more flexible description of distributional uncertainty. In particular, the transformation $x=Q(u), \; u=F(x)$, permits cumulative information functionals to be expressed directly on the probability scale $(0,1)$. Such a formulation is attractive as empirical quantiles provide a natural nonparametric basis for estimation and quantile representations are informative for asymmetric and heavy-tailed distributions. Some recent investigations on quantile-based fractional cumulative information measures, we refer to \citet{sebastian2025fractional}, \citet{paul2025quantile}, \citet{sunoj2026fractional} and the references therein.\\

Further, in many real-world situations, the uncertainty of a rv has to be measured in backward, known as the past lifetime.  For a non-negative lifetime rv $X$, the past lifetime (inactivity time) at age $t$ is $X^t = \left(t - X \mid X \leq t\right)$, describes how long ago failure occurred conditional on failure having occurred before $t$.  This makes past-lifetime information particularly relevant to retrospective failure analysis, inspection, maintenance and repair.  These observations motivate the development of a quantile-based fractional cumulative past inaccuracy measure. The principal contribution of the paper is therefore the integration of the Mittag--Leffler-based fractional information structure with a quantile formulation of cumulative past inaccuracy.  The fractional parameter $\alpha$ further introduces an additional degree of flexibility in assessing the discrepancy between the past-lifetime distributions.  This provides a unified framework in which the fractional order controls the information scale while the quantile representation captures the distributional characteristics of past lifetimes. \\

The remainder of the paper is organized as follows. Section~2 introduces the proposed quantile-based fractional cumulative past inaccuracy measure and establishes its fundamental properties. Section~3 develops its dynamic version and investigate its various properties. Section~4 discusses nonparametric estimation and associated asymptotic properties. Numerical illustrations and applications to lifetime data are presented in Section~5, followed by concluding remarks in Section~6.

\section{Quantile-based FCPI}

In this section, we study fractional cumulative past inaccuracy employing the concept of inverse MLF from a quantile point of view. This approach is an alternative method to the study of \eqref{FCPI}. Consider two absolutely continuous non-negative rvs, 
$X$ and $Y$ having CDFs $F(\cdot)$, $G(\cdot)$ and QFs $Q_X(\cdot)$, $Q_Y(\cdot)$. Further, $F(Q_X(u))=u$, $0<u<1$ and on differentiating $F(Q_X(u))=u$, we obtain
\begin{equation*}
q_X(u)f(Q_X(u))=1, \; 0<u<1.
\end{equation*}
Here, $q_X(u)$ denotes the quantile density function (QDF) and  $f(Q_X(u))$ denotes the density quantile function corresponding to $F$. Also, $Q_3(\cdot)=Q_Y^{-1}(Q_X(\cdot))=G(F^{-1}(\cdot))$ denote the QF of $F(G^{-1}(\cdot))$ and $q_3(u)=\frac{d}{du}Q_3(u)$ be the QDF of $Q_3(u)$. A useful reliability measure in the context of past lifetime rv is the reversed hazard function of $X$, given by $\tilde{h}_{X}(x)=\frac{f(x)}{F(x)}$. The corresponding quantile measure is the reversed hazard quantile function, 
\begin{equation}
\tilde{h}_{XQ}(u)=(u q_X(u))^{-1}.
\end{equation}
Another important reliability measure closely associated with the reversed hazard rate function is the mean past lifetime (MPL) of $X$ for $x>0$, defined as $\mathcal{M}_X(x)=E(x-X|X<x) = \frac{1}{F(x)}\int_0^x {F(u)du}$. Analogously to MPL, the corresponding function is the mean past quantile function \citep{nair2013quantile} of $X$, obtained as
\begin{equation}\label{Q-MPL}
\mathcal{M}_{XQ}(u)=u^{-1}\int_{0}^{u}pq_X(p)dp, \; 0<u<1.
\end{equation}
\begin{defn}
\textnormal Let $X$ and $Y$ have QFs $Q_X(\cdot)$ and $Q_Y(\cdot)$, respectively. Then the Q-FCPI measure of order $\frac{1}{\alpha}$ is
\begin{eqnarray}\label{qfcpi}
\mathcal{CI_\alpha^{Q}}(X,Y)&=& \int_{0}^{1}{F}(Q_X(p))[-Ln_{\alpha}(G(Q_X(p)))]^\frac{1}{\alpha} q_X(p)dp\\ 
{}&=&\int_{0}^{1}p[-Ln_{\alpha}(Q_3(p))]^\frac{1}{\alpha} q_X(p)dp\nonumber, \hspace{0.3cm}0<\alpha< 1.
\end{eqnarray}
\end{defn}
In \eqref{qfcpi}, the arguments of the fractional order logarithmic function is the QF, $Q_3(\cdot)$ defined on the interval [0,1], which assures the non-negativity of the proposed inaccuracy measure, that is $\mathcal{CI_\alpha^{Q}}(X,Y) \geq 0$. The measure \eqref{qfcpi} can be used to find the discrepancy between highly volatile components based on their past lifetimes, also useful in the analysis of chaotic systems where small perturbations can result in significant structural changes. In reliability studies, the past lifetime is the elapsed time between the failure of a system and the time when it is found to be down. The measure is an extension of the Q-EFCPE given in \eqref{FCPI}.  As $Ln_{\alpha}(\cdot)$ does not have a closed-form expression (see \cite{liang2018diffusion}), for the computation, the approximate expression of the Q-FCPI measure is followed, which can be deduced using $Ln_{\alpha} x \approx log \hspace{ 0.1cm} x^{\alpha!}$ for $0 < \alpha \leq 1$ (see \cite{jumarie2012derivation}).  Then
\begin{eqnarray}\label{aqfcpi}
\mathcal{CI_\alpha^{Q}}(X,Y)&\approx& \int_{0}^{1}[-\log(Q_3(p))^{\alpha !}]^\frac{1}{\alpha} p \ q_X(p)dp\\ 
{}&\approx& (\alpha !)^\frac{1}{\alpha}\int_{0}^{1}[-\log(Q_3(p))]^\frac{1}{\alpha} p \ q_X(p)dp\nonumber, \hspace{0.3cm}0<\alpha< 1.
\end{eqnarray}
Using the expansion of the Maclaurin series \citep{spiegel1953some},
\begin{equation*}
-\log(1-x)=\sum_{i=1}^{n} \frac{x^i}{i}, \; 0<x<1,
\end{equation*}
the Q-FCPI measure in \eqref{qfcpi} takes the form,
 \begin{equation*}
     \mathcal{CI_\alpha^Q}(X,Y)\approx(\alpha!)^\frac{1}{\alpha}\int_{0}^{1}\bigg(\sum_{i=1}^{\infty}\frac{(1-Q_3(p))^i}{i}\bigg)^{\frac{1}{\alpha}} p \ q_X(p)dp.
\end{equation*}

{

Using the approximation of the inverse MLF, $Ln_{\alpha} x \approx \log x^{\alpha !}$ for $0<\alpha \leq 1$, \cite{saha2025fractional} proposed a modified version of the FCPI measure (MFCPI) for two CDFs $F(\cdot)$ and $G(\cdot)$ as
\begin{equation}\label{MFCPI}
\mathcal{\widetilde{CI}_{\alpha}}(X,Y)=-\int_{0}^{\infty} F(x) Ln_{\alpha} G(x)dx \approx \alpha! \hspace{0.2cm}\mathcal{CI}(X,Y), \hspace{0.2cm} 0<\alpha<1.
\end{equation}
A quantile version of \eqref{MFCPI} (Q-MFCPI) as
\begin{equation}\label{qmfcpi}
\mathcal{\widetilde{CI}_{\alpha}^Q}(X,Y)=-\int_{0}^{1} p Ln_{\alpha} Q_3(p)q_{X}(p)dp \approx \alpha! \hspace{0.2cm}\mathcal{CI}^{Q}(X,Y), \hspace{0.2cm} 0<\alpha<1.
\end{equation}
Next, we present some important facts that can be easily deduced from \eqref{qfcpi}.
\begin{itemize}
\item The measures $\mathcal{CI_\alpha^{Q}}(X,Y)$ and $\mathcal{CI_\alpha^{Q}}(Y,X)$ are not symmetric.
\item \eqref{qfcpi} can be represented in terms of the reversed hazard quantile function of $X$, $\tilde{h}_{XQ}(u)=(u q_X(u))^{-1}$ as
\begin{equation}
\mathcal{CI_\alpha^{Q}}(X,Y)=\int_{0}^{1}[-Ln_{\alpha}(Q_3(p))]^\frac{1}{\alpha}[\tilde{h}_{XQ}(p)]^{-1}dp.
\end{equation}
\item For $\alpha=1$, the Q-FCPI in \eqref{qfcpi} reduces to the quantile-based cumulative past inaccuracy proposed by 
\cite{kayal2018quantile}.
\item If the two QFs $Q_X(\cdot)$ and $Q_Y(\cdot)$ coincide, $\mathcal{CI_\alpha^{Q}}(X,Y)$ reduces to the quantile-based fractional cumulative past entropy (see \cite{paul2025quantile}).
\end{itemize}
Now, we obtain closed form expression of the Q-FCPI measure using \eqref{aqfcpi} for some statistical models. 
\begin{ex}
Let $X$ and $Y$ follow power distribution with QFs $Q_X(u)=\beta u^{1/\gamma}$ and $Q_Y(u)=\beta u^{1/\delta}$, where $\beta,\gamma,\delta >0$  respectively. We obtain $q_X(u)=\frac{\beta}{\gamma}u^{\frac{1}{\gamma} -1}$ and $Q_3(u)=u^{\delta/\gamma}$. Then, the closed-form expression of Q-FCPI for $0<\alpha\leq1$ using \eqref{aqfcpi} is obtained as
$$\mathcal{CI_\alpha^{Q}}(X,Y)\approx\frac{\beta(\alpha! \delta)^{1/\alpha}\Gamma(1+\frac{1}{\alpha})}{(1+\gamma)^{1+1/\alpha}}.$$
and the plot of $\mathcal{CI_\alpha^{Q}}(X,Y)$ for different parameteric values are displayed in Figure \ref{fig1}.
\end{ex}
\begin{figure}[H]
\centering
\includegraphics[width=1.1\linewidth]{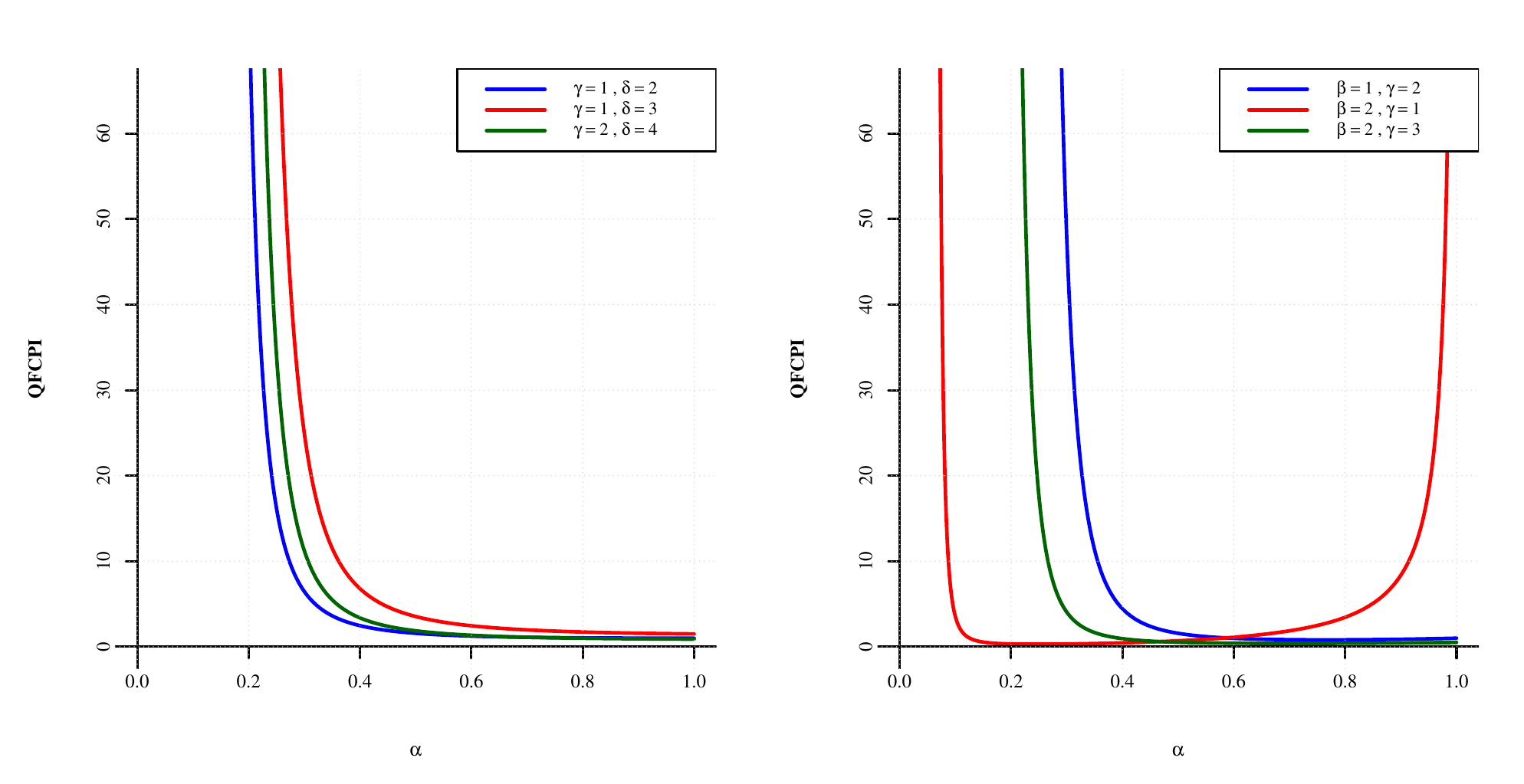}
\caption{Plot of Q-FCPI: For power distributions ($\beta=2$) in Example 2.1 and for Frechet distribution  in Example 2.2} 
\label{fig1}
\end{figure}
\begin{ex}
Consider the QFs of Frechet distributions $Q_X(u)=(-\log u)^{-1/\beta}$ and $Q_Y(u)=(-\log u)^{-1/\gamma}$, where $\beta,\gamma>0$. In this case, $Q_3(u)=e^{-(-\log u)^{\gamma/\beta}}$ and $q_X(u)=\frac{1}{\beta u}(-\log u)^{-(\frac{1}{\beta}+1)}$. By using \eqref{aqfcpi} the closed-form expression of Q-FCPI is given by
\[
\mathcal{CI_\alpha^{Q}}(X,Y)\approx
\begin{cases}
\frac{(\alpha!)^{1/\alpha}}{\beta}\Gamma \left(\frac{\gamma}{\beta \alpha}-\frac{1}{\beta}\right), & \mbox{if} \hspace{0.2cm} \gamma>\alpha,\\[0.5em]
+\infty,
& \mbox{if} \hspace{0.2cm} 0<\gamma<\alpha,
\end{cases}
\]
and the plot of $\mathcal{CI_\alpha^{Q}}(X,Y)$ for $\beta=0.8$, $\gamma=3$ is given in Figure \ref{fig2}.
\end{ex}
\begin{figure}[H]
\centering
\includegraphics[width=0.75\linewidth]{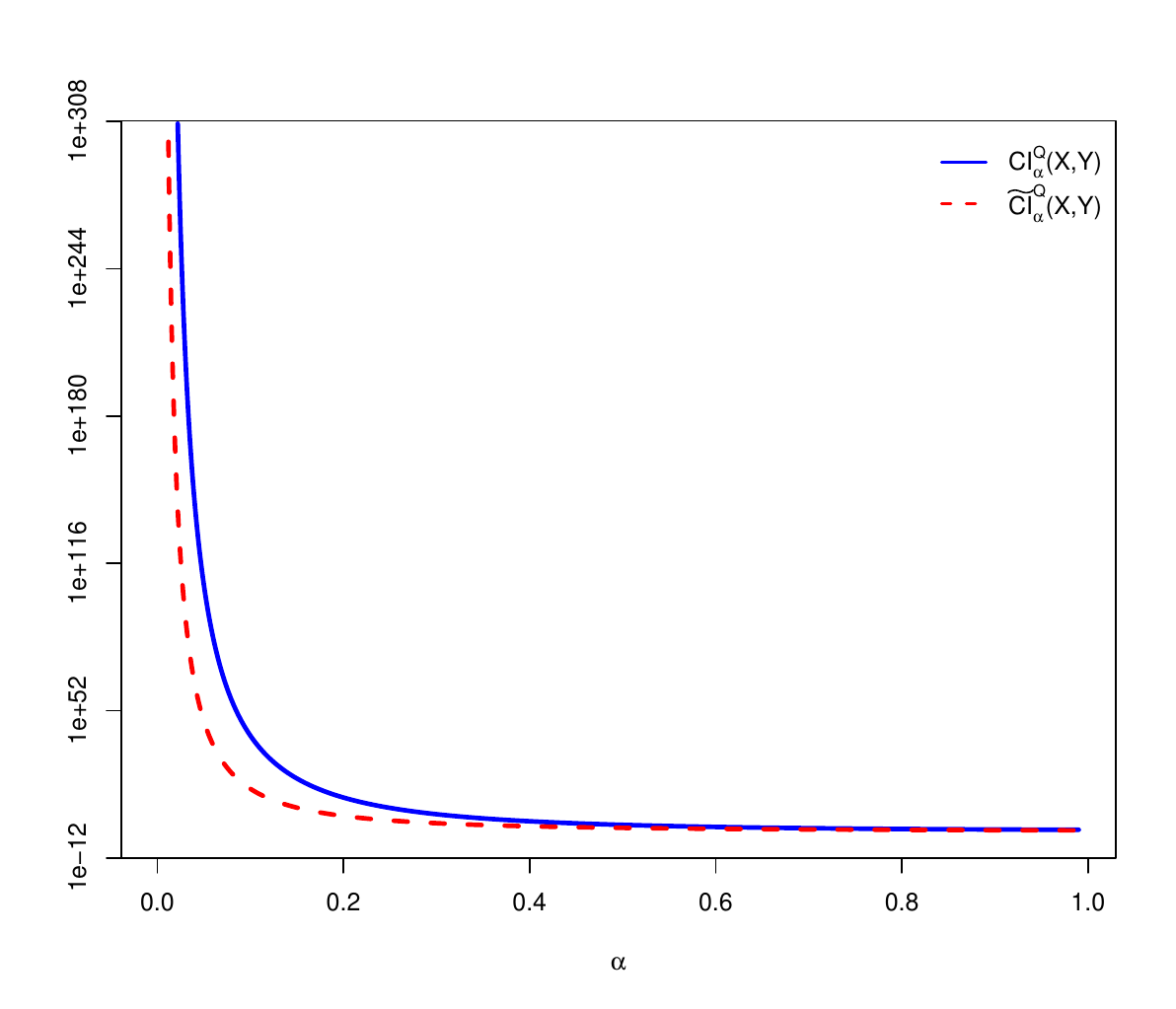}
\caption{Plot of $\mathcal{CI_\alpha^Q}(X,Y)$ and $\mathcal{\widetilde{CI}_\alpha^Q}(X,Y)$ for $\beta=0.8$, $\gamma=3$ in Example 2.2} 
\label{fig2}
\end{figure}
Next, we consider some examples to illustrate the usefulness of the quantile-based tool in \eqref{qfcpi} for computing the inaccuracy between $X$ and $Y$, where $X$ does not have an explicit form of CDF, though it has a closed-form of QF.
\begin{ex}
Let $X$ and $Y$ have QFs $Q_X(u)=2u-u^2$ and $Q_Y(u)=u$, $0<u<1$ respectively. Note that $Q_X(u)$ is the QF of a special case of Govindarajulu distribution and $Q_Y(u)$ is the QF of uniform distribution. We obtain $Q_3(u)=2u-u^2$ and $q_X(u)=2-2u$. By using \eqref{aqfcpi} an expression of Q-FCPI is given by
$$\mathcal{CI_\alpha^{Q}}(X,Y)\approx  (\alpha !)^\frac{1}{\alpha}\int_{0}^{1}(2p-2p^2)[-\log(2p-p^2)]^\frac{1}{\alpha}dp.$$
This integral can be obtained numerically for particular values of $\alpha$. For example, when $\alpha=0.2$, $\mathcal{CI_\alpha^{Q}}(X,Y)\approx 0.631$ and for $\alpha=0.5$, $\mathcal{CI_\alpha^{Q}}(X,Y)\approx 0.108$.
\end{ex}
\begin{ex}
Consider the QFs of a generalized lambda distribution $Q_X(u)=\frac{1}{\lambda_1}+\frac{1}{\lambda_2}(u^{\lambda_3}-(1-u)^{\lambda_4})$ and a uniform distribution $Q_Y(u)=u$ respectively, where $\lambda_1,\lambda_2,\lambda_4>0$ and $\lambda_3$ is a positive integer. Further, it is known that $Q_X(u)$ represents a life distribution if $\lambda_1-\lambda_2^{-1} \geq 0$. For simplicity of our calculation, we consider $\lambda_1=\lambda_2=1$. In this case, $Q_3(u)=1+u^{\lambda_3}-(1-u)^{\lambda_4}$ and $q_X(u)=\lambda_3u^{\lambda_3-1}+\lambda_4(1-u)^{\lambda_4-1}$. Substituting these in \eqref{aqfcpi}, we obtain
$$\mathcal{CI_\alpha^{Q}}(X,Y)\approx  (\alpha !)^\frac{1}{\alpha}\int_{0}^{1}p[-\log(1+p^{\lambda_3}-(1-p)^{\lambda_4})]^\frac{1}{\alpha}(\lambda_3p^{\lambda_3-1}+\lambda_4(1-p)^{\lambda_4-1})dp.$$
which can be computed numerically for specific values of the parameters. For instance, when $\lambda_3=1,\lambda_4=0.2,\alpha=0.2$, we have $\mathcal{CI_\alpha^{Q}}(X,Y)\approx0.9923$ and when $\lambda_3=1,\lambda_4=0.2,\alpha=0.5$, we get $\mathcal{CI_\alpha^{Q}}(X,Y)\approx0.3063$.
\end{ex}
\begin{ex}
(i) Let $X$ and $Y$ be two non-negative and absolutely continuous rvs with QFs $Q_X(\cdot)$ and $Q_Y(\cdot)$, respectively, which satisfy PHM (proportional hazards model), 
\begin{equation}\label{phm}
Q_Y(u)=Q_X(1-(1-u)^\frac{1}{\theta}),    
\end{equation}
where $\theta>0$. In this case, $Q_3(u)=1-(1-u)^\theta$. Thus, from \eqref{aqfcpi} we get $$\mathcal{CI_\alpha^{Q}}(X,Y)\approx (\alpha !)^\frac{1}{\alpha}\int_{0}^{1}p[-\log(1-(1-p)^\theta)]^\frac{1}{\alpha}q_X(p)dp.$$\\
(ii) Assuming $X$ and $Y$ satisfy PRHM (proportional reversed hazards model), that is,
\begin{equation}\label{PRHM}
Q_Y(u)=Q_X(u^{1/\theta}), 
\end{equation}
where $\theta$ is a positive integer. Here, $Q_3(u)=u^\theta$, then $$\mathcal{CI_\alpha^{Q}}(X,Y)\approx (\alpha !\theta)^\frac{1}{\alpha}\int_{0}^{1}p[-\log p]^\frac{1}{\alpha}q_X(p)dp=(\alpha !\theta)^\frac{1}{\alpha} \mathcal{E_\alpha^Q}(X),$$
where $\mathcal{E_\alpha^Q}(X)$ is the quantile-based fractional cumulative past entropy (Q-FCPE) of $X$.
\end{ex}
\begin{res}
Assuming $X$ and $Y$ are rvs with respective QFs $Q_X(\cdot)$ and $Q_Y(\cdot)$. For $0<\alpha<1$ and $0\leq q_X(u)\leq 1$, the Q-FCPI and Q-MFCPI satisfy the relationship
$$\mathcal{CI_\alpha^{Q}}(X,Y) \geq \left[\mathcal{\widetilde{CI}_{\alpha}^Q}(X,Y)\right]^\frac{1}{\alpha}. $$
\end{res}
\begin{rem}
Analytically, comparing $\mathcal{CI_\alpha^Q}(X,Y)$ with $\mathcal{\widetilde{CI}_{\alpha}^Q}(X,Y)$ is generally difficult and therefore we often employ graphical methods. Using Example 2.2, with $\mathcal{CI_\alpha^Q}(X,Y) \approx
\frac{(\alpha!)^{1/\alpha}}{\beta}\Gamma \left(\frac{\gamma}{\beta \alpha}-\frac{1}{\beta}\right)$ and $\mathcal{\widetilde{CI}_{\alpha}^Q}(X,Y) \approx \frac{(\alpha!)}{\beta}\Gamma \left(\frac{\gamma-1}{\beta}\right)$ we illustrate this. The plots of $\mathcal{CI_\alpha^Q}(X,Y)$ and $\mathcal{\widetilde{CI}_{\alpha}^Q}(X,Y)$ for $\beta=0.8$ and $\gamma=3$ are given in Figure \ref{fig2}, validating Result 2.1.
\end{rem}
\begin{thm}
Suppose $X$ and $Y$ are two non-negative rvs. Then
    \begin{enumerate}
\item $\mathcal{CI_{\alpha}^{Q}}(X,Y) \geq (\alpha !)^{\frac{1}{\alpha}} \int_{0}^{1} p(Q_3(p)-1)^\frac{1}{\alpha} q_X(p) dp$, 
\item $\mathcal{CI_{\alpha}^{Q}}(X,Y)\geq C(\alpha)e^{\mathcal{E_Q}(X)}$, where $\mathcal{E_Q}(X)$ is the quantile-based differential entropy and $C(\alpha)=e^{\int_{0}^{1}\log[(\alpha!)^{\frac{1}{\alpha}}p(-\log(Q_3(p)))^\frac{1}{\alpha}] dp}$ is a function of $\alpha$.
    \end{enumerate}
	\begin{proof}
  First part readily follows from the well-known inequality $\log x \leq 1-x$, $\forall x \leq 1$. For obtaining the second part, we employ the log-sum inequality,\\
		\begin{eqnarray}\label{lb}
			\int_{0}^{1} f(Q(p))\log\frac{f(Q(p))}{(\alpha!)^\frac{1}{\alpha}p(-\log(Q_3(p)))^\frac{1}{\alpha}}dQ(p) & \geq & \log \frac{1}{(\alpha!)^\frac{1}{\alpha}\int_{0}^{1}p (-\log(Q_3(p)))^\frac{1}{\alpha} q_X(p)dp} \nonumber\\ 
			& \approx & -\log \mathcal{{CI}_{\alpha}^{Q}}(X,Y). 
		\end{eqnarray}
		Further, the LHS of the above inequality can be expressed as;
		\begin{equation}\label{lb1}
			\int_{0}^{1} f(Q(p))\log\frac{f(Q(p))}{(\alpha!)^\frac{1}{\alpha}p(-\log(Q_3(p)))^\frac{1}{\alpha}}dQ(p)  = -\mathcal{E_Q}(X) - \int_{0}^{1} \log[(\alpha!)^\frac{1}{\alpha}p(-\log(Q_3(p)))^\frac{1}{\alpha}] dp.
		\end{equation}
		From \eqref{lb} and \eqref{lb1},
		\begin{equation}\label{6}
			\log (\mathcal{CI_{\alpha}^{Q}}(X)) \geq  \mathcal{E_Q}(X) + \int_{0}^{1} \log[(\alpha!)^\frac{1}{\alpha}p(-\log(Q_3(p)))^\frac{1}{\alpha}] dp.   
		\end{equation}
		Exponentiating both sides of \eqref{6} we get,
		\begin{equation*}
			\mathcal{CI_{\alpha}^{Q}}(X) \geq C(\alpha)e^{\mathcal{E}_Q(X)}.  
		\end{equation*}
	\end{proof}
    \end{thm}

To find the best statistical models for the data, the usual practice is to apply a transformation to the data. A similar approach to this is to keep the actual data fixed and to transform the QF. In the following result, we provided a tool for computing the FCPI when transformation is applied to the QFs of $X$ and $Y$.
\begin{thm}\label{thm2.1}
Consider $\tau_1(\cdot)$ and $\tau_2(\cdot)$ to be two continuous non-decreasing and invertible transformations, then
\begin{equation}\label{tqfcpi}
\mathcal{CI_\alpha^Q}({\tau_1(X),\tau_2(Y))} \approx (\alpha !)^{\frac{1}{\alpha}} \int_{0}^{1} p(- \log(Q_Y^{-1}(\tau_2^{-1}(\tau_1(Q_X(p))))))^{\frac{1}{\alpha}} d \tau_1(Q_X(p)).
\end{equation}
\begin{proof}
Suppose the CDFs corresponding to $\tau_1(X)$ and $\tau_2(Y)$ be $F_{\tau_1(X)}(x)$ and $F_{\tau_2(Y)}(x)$ respectively. Then, by using \eqref{aqfcpi} we obtain
\begin{equation}\label{tqfcpi1}
\mathcal{CI_\alpha}({\tau_1(X),\tau_2(Y))} \approx (\alpha !)^{\frac{1}{\alpha}}
\int_{0}^{\infty} F_{\tau_{1}(X)}(x)(-\log F_{\tau_{2}(Y)}(x))^{\frac{1}{\alpha}} dx.
\end{equation}
In analogy with \cite{sunoj2018quantile}, the QFs of $F_{\tau_1(X)}(x)$ and $F_{\tau_2(Y)}(x)$ can be shown to be $\tau_1(Q_X(u))$ and $Q_Y^{-1}(\tau_2^{-1}(\tau_1(Q_X(u))))$ respectively. On substituting these in \eqref{tqfcpi1}, the desired result follows. 
\end{proof}
\end{thm}
For the illustration of Theorem 2.1, we provide the following example:
\begin{ex}
Let $X$ and $Y$ be two independent reciprocal exponential rvs with respective QFs $Q_X(u)=-\lambda_1/\ln u$ and $Q_Y(u)=-\lambda_2/\ln u$, where $\lambda_1, \lambda_2 >0$ and $0<u<1$. Consider the transformation $\tau_1(X)=e^{-\lambda_1/X}$ and $\tau_2(Y)=e^{-\lambda_2/Y}$. Then, $Q_Y^{-1}(\tau_2^{-1}(\tau_1(Q_X(u))))=u$. Thus, from \eqref{tqfcpi}, we obtain
\begin{eqnarray*}
\mathcal{CI_\alpha^Q}({e^{-\lambda_1/X}, e^{-\lambda_2/Y}}) &\approx &(\alpha !)^{\frac{1}{\alpha}}\int_{0}^{1} p(-\log p)^{\frac{1}{\alpha}} dp\\
{}& = & \frac{\Gamma(\alpha+1)^\frac{1}{\alpha}\left(\frac{1}{\alpha}\right)!}{2^{1+1/\alpha}} 
\end{eqnarray*}
\end{ex}

It is quite relevant to establish a relation between our proposed measure and other well-known measures. In this context, we introduce quantile-based fractional cumulative Kullback-Leibler divergence (KL-divergence) and quantile-based fractional cumulative past entropy of order $\frac{1}{\alpha}$.\\

\cite{saha2025fractional} proposed fractional cumulative past KL-divergence of order $\frac{1}{\alpha}$ corresponding to rvs $X$ and $Y$ with CDF $F(x)$ and $G(x)$ as;
\begin{equation}\label{FKL}
\mathcal{KL_\alpha}(X,Y)=\int_{0}^{\infty} F(x)\left[Ln_\alpha \frac{F(x)}{G(x)}\right]^\frac{1}{\alpha}dx+E(X)-E(Y)
\end{equation}
We define the corresponding quantile-based version of \eqref{FKL} as;
\begin{equation}\label{qKL}
\mathcal{KL_\alpha^Q}(X,Y)=\int_{0}^{1} p\left[Ln_\alpha \frac{p}{Q_3(p)}\right]^\frac{1}{\alpha} q_X(p)dp+E(X)-E(Y)
\end{equation}
In a similar line, the quantile version of \eqref{EFCPE}, denoted as Q-EFCPE for $X$ is given by
\begin{equation}\label{qfcpe}
\mathcal{\overline{CE}_\alpha^Q}(X)=\int_{0}^{1}p[-Ln_\alpha p]^\frac{1}{\alpha} q_X(p)dp.
\end{equation}
The following proposition indicates a connection between the measure we introduced and some well-known measures like Q-EFCPE and quantile-based fractional KL-divergence measure.
\begin{prop}
The $\mathcal{CI_\alpha^Q}(X,Y)$ can be expressed as;
\begin{equation*}
\mathcal{CI_\alpha^Q}(X,Y)=\mathcal{\overline{CE}_\alpha^Q}(X)+(-1)^{\frac{1}{\alpha}+1}\{\mathcal{KL_\alpha^Q}(X,Y)+E(X)-E(Y)\}, \hspace{0.2cm} 0<\alpha\leq1,\\
\end{equation*}
where $\mathcal{\overline{CE}_\alpha^Q}(X)$ and $\mathcal{KL_\alpha^Q}(X,Y)$ are given in \eqref{qKL} and \eqref{qfcpe}, respectively.
\begin{proof}
From \eqref{qfcpe}, we have
\begin{eqnarray}\label{prop 2.1}
\mathcal{\overline{CE}_\alpha^Q}(X)&=&\int_{0}^{1}p[-Ln_\alpha p]^\frac{1}{\alpha} q_X(p)dp\\\nonumber
{}&=& \int_{0}^{1} p\bigg[-Ln_\alpha \frac{p}{Q_3(p)} Q_3(p)\bigg]^\frac{1}{\alpha} q_X(p)dp.\nonumber
\end{eqnarray}
By using the result $[Ln_\alpha(mn)]^\frac{1}{\alpha}=[Ln_\alpha(m)]^\frac{1}{\alpha}+[Ln_\alpha(n)]^\frac{1}{\alpha}$, \eqref{prop 2.1} can be rewritten as
\begin{eqnarray*}
 \mathcal{\overline{CE}_\alpha^Q}(X)&=& \int_{0}^{1} p[-Ln_\alpha Q_3(p)]^\frac{1}{\alpha} q_X(p)dp + \int_{0}^{1} p\bigg[-Ln_\alpha\frac{p}{Q_3(p)}\bigg]^\frac{1}{\alpha} q_X(p)dp \\
 {}&=& \mathcal{CI_\alpha^Q}(X,Y)+(-1)^\frac{1}{\alpha} \{\mathcal{KL_\alpha^Q}(X,Y)-E(Y)+E(X)\}. 
\end{eqnarray*}
Thus, 
\begin{equation*}
\mathcal{CI_\alpha^Q}(X,Y)=\mathcal{\overline{CE}_\alpha^Q}(X)+(-1)^{\frac{1}{\alpha}+1}\{\mathcal{KL_\alpha^Q}(X,Y)+E(X)-E(Y)\}.
\end{equation*}
Hence, the proof.
\end{proof}
\end{prop}
When $\alpha=1$, the relation in Proposition 2.1 reduces to the following expression:
\begin{equation}
\mathcal{CI^Q}(X,Y)=\mathcal{\overline{CE}^Q}(X)+\mathcal{KL^Q}(X,Y)+E(X)-E(Y),
\end{equation}
where $\mathcal{CI^Q}(X,Y)$, $\mathcal{\overline{CE}^Q}(X,Y)$ and $\mathcal{KL^Q}(X,Y)$ are cumulative past inaccuracy, cumulative past entropy and cumulative past Kullback-Leibler divergence based on quantile functions, respectively. \\ 

Affine transformations play a vital role in statistics and probability due to their ability to condense, standardize, and preserve the structure of data. They enable effective analysis, explanation, and visualization of datasets that are complex in nature by employing many fundamental statistical techniques and models. The affine transformation corresponding to two rvs $X$ and $Y$ is defined as $Y=aX+b$, $a>0$ and $b \geq 0$. Now, we illustrate the effectiveness of the Q-FCPI in \eqref{qfcpi} under an affine transformation.
\begin{prop}
Suppose $X^{\prime}=aX+b$ and  $Y^{\prime}=aY+b$, $a>0$ and $b \geq 0$  are affine transformations of rvs $X$ and $Y$ respectively. Then, for $0 <\alpha \leq 1$, we obtain
\begin{equation*}
 \mathcal{CI_\alpha^Q}(X^\prime,Y^\prime)=a \mathcal{CI_\alpha^Q}(X,Y).   
\end{equation*}
\end{prop}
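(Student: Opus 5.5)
The plan is to compute the two ingredients of the definition \eqref{qfcpi} for the transformed pair, namely the quantile density of $X'$ and the function $Q_3$ for the pair $(X',Y')$, and substitute them. Since $x\mapsto ax+b$ with $a>0$ is continuous and strictly increasing, the quantile functions transform directly:
\begin{equation*}
Q_{X'}(u)=aQ_X(u)+b, \qquad Q_{Y'}(u)=aQ_Y(u)+b, \qquad 0<u<1.
\end{equation*}
Differentiating gives $q_{X'}(u)=a\,q_X(u)$. For the inverse we have $Q_{Y'}^{-1}(x)=Q_Y^{-1}\big((x-b)/a\big)$, since $G_{Y'}(x)=G((x-b)/a)$.

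The key step is to show that $Q_3$ is invariant under the common affine map. Write $Q_3'(u)=Q_{Y'}^{-1}(Q_{X'}(u))$. Then
\begin{equation*}
Q_3'(u)=Q_Y^{-1}\Big(\frac{aQ_X(u)+b-b}{a}\Big)=Q_Y^{-1}(Q_X(u))=Q_3(u).
\end{equation*}
Equivalently, $G_{Y'}(Q_{X'}(u))=G(Q_X(u))$. This is the only point that needs care. It works because the \emph{same} $a,b$ are applied to both variables, so the shift and scale cancel inside $G$. With different constants the identity would fail, as the general transformation formula of Theorem \ref{thm2.1} shows.

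Substituting into \eqref{qfcpi} gives
\begin{equation*}
\mathcal{CI_\alpha^Q}(X',Y')=\int_0^1 p\,[-Ln_\alpha(Q_3(p))]^{\frac{1}{\alpha}}\,a\,q_X(p)\,dp=a\,\mathcal{CI_\alpha^Q}(X,Y).
\end{equation*}
The factor $a$ comes out of the integral, and the location parameter $b$ disappears entirely because only the quantile density enters the integrand.

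Alternatively, the result follows as a special case of Theorem \ref{thm2.1} with $\tau_1=\tau_2=ax+b$. There $d\tau_1(Q_X(p))=a\,q_X(p)\,dp$ and $Q_Y^{-1}(\tau_2^{-1}(\tau_1(Q_X(p))))=Q_3(p)$. However, Theorem \ref{thm2.1} is stated only in approximate form, so I would use the direct computation above, which gives the exact equality for all $0<\alpha\le1$. No step is a real obstacle; the only thing to state explicitly is that $a>0$ is needed for monotonicity.
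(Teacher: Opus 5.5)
Your proof is correct, but you do the change of variables on the quantile side, whereas the paper does it on the distribution-function side. The paper writes $\mathcal{CI}_\alpha(X',Y')=\int_b^\infty F_X\bigl(\frac{u-b}{a}\bigr)\bigl[-Ln_\alpha G\bigl(\frac{u-b}{a}\bigr)\bigr]^{1/\alpha}du$, substitutes to get $a\,\mathcal{CI}_\alpha(X,Y)$, and then asserts that the ``corresponding quantile version'' is $a\int_0^1 p[-Ln_\alpha Q_3(p)]^{1/\alpha}q_X(p)\,dp$.

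The paper's route is a one-line substitution that needs no quantile computations. However, it leaves implicit exactly the two facts you prove, namely $q_{X'}=a\,q_X$ and that $Q_3$ is unchanged. It also relies on identifying the quantile integral with $\int_0^\infty F(x)[-Ln_\alpha G(x)]^{1/\alpha}dx$. That identification fails when the support of $X$ has a finite right endpoint $r$ with $G(r)<1$, because the CDF integral then carries an extra term $\int_r^\infty[-Ln_\alpha G(x)]^{1/\alpha}dx$. This extra term also scales by $a$, so the statement itself is unaffected.

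Your argument works with $\mathcal{CI}_\alpha^Q$ exactly as defined, and it never uses $b\ge 0$, which serves only to keep $X'$ non-negative. It also shows why the result holds: $Q_3$ is invariant under any common strictly increasing transformation $\tau$ of $X$ and $Y$. The same computation therefore gives the exact identity $\mathcal{CI}_\alpha^Q(\tau(X),\tau(Y))=\int_0^1 p[-Ln_\alpha Q_3(p)]^{1/\alpha}\tau'(Q_X(p))\,q_X(p)\,dp$ for differentiable $\tau$, and the proposition is the case $\tau(x)=ax+b$.
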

\begin{proof}
Let $F_{X^\prime}(x)=F_{X}(\frac{x-b}{a})$ and $F_{Y^\prime}(x)=F_Y(\frac{x-b}{a})$ be the CDFs corresponding to $X^\prime$ and $Y^\prime$ respectively. Now, from the definition of FCPI in (), we obtain
\begin{eqnarray} \label{affn trans}
\mathcal{CI_\alpha}(X^\prime, Y^\prime)&=&\int_{b}^{\infty} F_{X^\prime}(u)[-Ln_{\alpha} F_{Y^\prime}(u)]^\frac{1}{\alpha} du\\ \nonumber
{}&=& \int_{b}^{\infty} F_{X}\left(\frac{u-b}{a}\right)\left[-Ln_{\alpha}F_{Y}\left({\frac{u-b}{a}}\right)\right]^\frac{1}{\alpha} du\\ \nonumber
{}&=&a \int_{0}^{\infty} F_X(u)[-Ln_\alpha F_Y(u)]^\frac{1}{\alpha} du\\
{}&=& a \mathcal{CI_\alpha}(X,Y)\nonumber
\end{eqnarray}
Then, the corresponding quantile version of \eqref{affn trans} is;
\begin{eqnarray*}
\mathcal{CI_\alpha^Q}(X^\prime,Y^\prime)&=&a \int_{0}^{1}p [-Ln_\alpha Q_3(p)]^\frac{1}{\alpha} q_X(p)dp\\
{}&=& a \mathcal{CI_\alpha^Q}(X,Y).
\end{eqnarray*}
Thus, the proof is completed.
\end{proof}
\begin{lem}\label{Lem 2.1}
    Let $\alpha$ be a rational number in (0,1), such that $\frac{1}{\alpha}$ is an integer. Then, the function
\begin{equation}\label{lem con}
\mathcal{W}_{\alpha,Y}(u)=\int_{u}^{1}[-Ln_\alpha Q_3(p)]^\frac{1}{\alpha} q_X(p) dp
\end{equation}
is\\
(i) decreasing and convex, if $\frac{1}{\alpha}$ is odd;\\
(ii) decreasing and concave in u, if $\frac{1}{\alpha}$ is even.
\begin{proof}
 Differentiating $\mathcal{W}_{\alpha,Y}(u)$ in \eqref{lem con} with respect to $u$, we obtain
 \begin{equation}\label{dif}
 \mathcal{W}^{\prime}_{\alpha,Y}(u)=\frac{d}{du} \mathcal{W}_{\alpha,Y}(u) = -(-Ln_\alpha Q_3(u))^\frac{1}{\alpha} \leq 0,
 \end{equation}
implying with respect to $0 < u < 1$, $\mathcal{W}_{\alpha,Y}(u)$ is decreasing. Since, $Ln_\alpha x$ is the inverse of MLF, say $g^{-1}(x)$, i.e, $Ln_\alpha x=g^{-1}(x)\implies Ln_{\alpha} Q_3(u) =g^{-1}(Q_3(u))$. Hence, from \eqref{dif}, for $\alpha \in (0,1)$, we obtain
\begin{eqnarray}\label{dif 2}
 \mathcal{W}^{\prime\prime}_{\alpha,Y}(u)&=& \frac{d}{du}\left\{-(-g^{-1}(Q_3(u)))^\frac{1}{\alpha}\right\}\\\nonumber
 {}&=& (-1)^{1+\frac{1}{\alpha}} \frac{q_3(u)}{g^{\prime}[g^{-1}(Q_3(u))]}\left\{
\begin{array}{ll}
\geq 0, \hspace{0.15cm}if \hspace{0.15cm} \frac{1}{\alpha} \hspace{0.15cm} \mbox{is odd} ;\\
\leq 0, \hspace{0.15cm}if \hspace{0.15cm} \frac{1}{\alpha} \hspace{0.15cm}  \mbox{is even}.
\end{array}
\right.
\end{eqnarray}
Hence, from \eqref{dif 2}, it is understood that $\mathcal{W}_{\alpha,Y}(u)$ is convex in $u$, if $\frac{1}{\alpha}$ is an odd number and concave in $u$, if $\frac{1}{\alpha}$ is an even number. Thus, the proof is completed.
\end{proof}
\end{lem}
\begin{prop}\label{prop 2.3}
Suppose $X$ and $Y$ have QFs $Q_X(\cdot)$ and $Q_Y(\cdot)$, respectively. Then,
\begin{eqnarray}
\mathcal{CI_\alpha^Q}(X,Y)=E_X(\mathcal{W}_{\alpha,Y}(u)).
\end{eqnarray}
\begin{proof}
From the definition of Q-FCPI in \eqref{qfcpi}, we have
\begin{eqnarray*}
\mathcal{CI_\alpha^Q}(X,Y)&=&\int_{0}^{1}p[-Ln_{\alpha}(Q_3(p))]^\frac{1}{\alpha} q_X(p)dp\nonumber\\
{}&=& \int_{0}^{1}\left[\int_{0}^{p}du\right][-Ln_{\alpha}(Q_3(p))]^\frac{1}{\alpha} q_X(p)dp\nonumber\\
{}&=& \int_{0}^{1}\left[\int_{u}^{1}[-Ln_{\alpha}(Q_3(p))]^\frac{1}{\alpha} q_X(p)dp\right]du\nonumber\\
{}&=& E_X(\mathcal{W}_{\alpha,Y}(u)),
\end{eqnarray*}
which gives the desired result.
\end{proof}
\end{prop}
\subsection{Stochastic Orders}
In probability and statistics, stochastic orders are crucial tools for comparing rvs based on variety of statistical measures useful in different domains of applications. They pave the way to more robust and comprehensible assessments that improve decision-making and risk management. Suppose $Q_X(\cdot)$ and $Q_Y(\cdot)$ are QFs corresponding to rvs $X$ and $Y$ with respective CDFs $F_X(\cdot)$ and $F_Y(\cdot)$. Then, $X$ is smaller than $Y$
\begin{itemize}
        \item [(i)] in stochastic ordering, represented as $X \leq_{st} Y$ if and only if $F(x) \geq G(x)$ or equivalently $Q_X(u) \leq Q_Y(u)$ $\forall$ $u$ in (0,1);
    \item [(ii)] in decreasing convex ordering, represented as $X \leq_{dcx} Y$ if $E(\psi(X)) \leq E(\psi(Y))$, for any convex decreasing function $\psi(\cdot)$ such that the mean exist;
\end{itemize}
\begin{prop}
Suppose $X$ and $Y$ have QFs $Q_X(\cdot)$ and $Q_Y(\cdot)$, respectively. 
\begin{itemize}
    \item [(A)] If $X \leq_{st} Y$, we get $\mathcal{CI_\alpha^Q}(X,Y) \geq max\{\mathcal{\overline{CE}_\alpha^Q}(X), \mathcal{\overline{CE}_\alpha^Q}(Y)\}$;
    \item [(B)] If $X \geq_{st} Y$, we get  $\mathcal{CI_\alpha^Q}(X,Y) \leq min\{\mathcal{\overline{CE}_\alpha^Q}(X), \mathcal{\overline{CE}_\alpha^Q}(Y)\}$;
\end{itemize}
where $\mathcal{\overline{CE}_\alpha^Q}(X)$ and $\mathcal{\overline{CE}_\alpha^Q}(Y)$ are the Q-EFCPEs of $X$ and $Y$, respectively.
\begin{proof}
(A) Under the assumptions made, $X \leq_{st} Y\implies F(x) \geq G(x) \implies u \geq Q_3(u)$ $\forall \hspace{0.2cm} u$ in (0,1). The inverse MLF $Ln_{\alpha}(x)$ is increasing function with respect to $x$ for $0 < \alpha < 1$ (see \cite{liang2022computation}). Thus,
\begin{eqnarray}\label{st 1}
Ln_{\alpha} p &\geq& Ln_{\alpha} Q_3(p)\\\nonumber
\implies \int_{0}^{1} p [-Ln_{\alpha}p]^\frac{1}{\alpha}q_X(p)dp &\leq &\int_{0}^{1} p [-Ln_{\alpha}Q_3(p)]^\frac{1}{\alpha}q_X(p)dp  \\ \nonumber  
\implies \mathcal{\overline{CE}_\alpha^Q}(X) &\leq &\mathcal{CI_\alpha^Q}(X,Y).
\end{eqnarray}
Moreover, $p \geq Q_3(p)$ and $[-Ln_{\alpha}Q_3(p)]^\frac{1}{\alpha} \geq 0$ for $0<\alpha<1$. Thus, we have
\begin{eqnarray}\label{st 2}
p[Ln_{\alpha}Q_3(p)]^\frac{1}{\alpha}& \geq & Q_3(p)[Ln_{\alpha}Q_3(p)]^\frac{1}{\alpha}\\\nonumber
\implies \int_{0}^{1} p [-Ln_{\alpha}Q_3(p)]^\frac{1}{\alpha}q_X(p)dp &\geq &\int_{0}^{1} Q_3(p) [-Ln_{\alpha}Q_3(p)]^\frac{1}{\alpha}q_X(p)dp  \\ \nonumber  
\implies \mathcal{CI_\alpha^Q}(X,Y) &\geq &\mathcal{\overline{CE}_\alpha^Q}(Y).
\end{eqnarray}
On combining \eqref{st 1} and \eqref{st 2}, the required result follows.

(B) The proof of this part is the same as that of Part (A). Therefore, it is omitted.
\end{proof}
\end{prop}
\begin{prop}
Consider $X$ and $Y$ with QFs $Q_X(\cdot)$ and $Q_Y(\cdot)$ respectively. If $X \leq_{dcx} (\leq_{dcv})Y$, then for $0 <\alpha <1$,\\
\begin{equation}
 \mathcal{CI_\alpha^Q}(X,Y) \leq \mathcal{\overline{CE}_\alpha^Q}(Y),  \mbox{when $\frac{1}{\alpha}$ is odd (even).}   
\end{equation}
\begin{proof}
From, Lemma~\ref{Lem 2.1}, the function $\mathcal{W}_{\alpha,Y}(u)$ is decreasing convex (concave) with respect to $u$ when $\frac{1}{\alpha}$ is odd (even). Now, under our assumption, we obtain
$$E_X(\mathcal{W}_{\alpha,Y}(u)) \leq E_Y(\mathcal{W}_{\alpha,Y}(u))=\mathcal{CI_\alpha^Q}(Y,Y)=\mathcal{\overline{CE}_\alpha^Q}(Y) $$
Thus, the remaining part of the proof can be easily obtained from Proposition~\ref{prop 2.3}.
\end{proof}
\end{prop}
\section{Quantile-based dynamic FCPI measure}
In the previous section, we discussed the Q-FCPI measure and studied its various properties. There are some situations in reliability and survival analysis where the current ages of two systems are taken into account. In such situations, the measure defined in \eqref{qfcpi} may not be appropriate, but dynamic measures are useful to explain the uncertainty of random lifetimes when age changes. Let $X$ be the random lifetime of a system. Then $[t-X|X \leq t], \hspace{0.1cm} t>0$ is the system's inactivity time. 
The inactivity time of a system is the duration of time between the inspection time $t$ and the failure time $X$, given that at time $t$ the system is down (see \cite{saha2025fractional}). Here, rv $[t-X|X \leq t]$ has relevance in economics, as it reflects the income distribution of the poor for a poverty line $t$. Motivated by these, \cite{saha2025fractional} proposed a dynamic fractional cumulative past inaccuracy (DFCPI) measure based on the inverse MLF function defined as
\begin{equation}\label{DFCPI}
\mathcal{CI_{\alpha}}(X,Y;t)=\int_{0}^{t} \frac{F(x)}{F(t)}\left[-Ln_{\alpha} \frac{G(x)}{G(t)}\right]^\frac{1}{\alpha}dx, \hspace{0.1cm}0<\alpha<1, \hspace{0.1cm}t>0.
\end{equation}
\begin{defn}
 Let $X$ and $Y$ be two nonnegative and absolutely continuous rvs with QFs $Q_1(\cdot)$ and $Q_2(\cdot)$, respectively. Then, the quantile-based dynamic fractional cumulative past inaccuracy (Q-DFCPI) function corresponding to \eqref{DFCPI} is,  
 \begin{eqnarray}\label{Q-DFCPI}
\mathcal{CI_\alpha^Q}(X,Y;u)&=&\int_{0}^{u} \frac{F(Q_X(p))}{F(Q_X(u))}\left[-Ln_{\alpha} \frac{G(Q_X(p))}{G(Q_X(u))}\right]^\frac{1}{\alpha}dQ_X(p)\\
{}&=& \frac{1}{u}\int_{0}^{u} p\left[-Ln_{\alpha} \frac{Q_3(p)}{Q_3(u)}\right]^\frac{1}{\alpha}q_X(p)dp, \hspace{0.1cm}0<\alpha<1, \hspace{0.1cm}0<u<1
 \end{eqnarray}
\end{defn}
When $u \rightarrow 1$, the Q-DFCPI in \eqref{Q-DFCPI} tends to the Q-FCPI measure in \eqref{qfcpi}. The Q-DFCPI measure is approximated as follows using the approximation $Ln_{\alpha}x \approx log x^{\alpha!}$ $(0<\alpha<1)$, 
\begin{equation}
\mathcal{CI_\alpha^Q}(X,Y;u)\approx\frac{(\alpha!)^\frac{1}{\alpha}}{u}\int_{0}^{u} p\left[-\log \frac{Q_3(p)}{Q_3(u)}\right]^\frac{1}{\alpha}q_X(p)dp. 
\end{equation}
Next, in line with \eqref{qmfcpi}, we propose another inaccuracy measure, say quantile-based modified dynamic cumulative past inaccuracy (Q-MDFCPI), given by
\begin{equation}\label{Q-MDFCPI}
\mathcal{\widetilde{CI}_\alpha^Q}(X,Y;u)=\frac{1}{u}\int_{0}^{u} p\left[-Ln_{\alpha} \frac{Q_3(p)}{Q_3(u)}\right]q_X(p)dp.
\end{equation}
\begin{rem}
Assuming $X$ and $Y$ are rvs with respective QFs $Q_X(\cdot)$ and $Q_Y(\cdot)$. For $0<\alpha<1$ and $0\leq q_X(u)\leq 1$, the Q-DFCPI and Q-MDFCPI satisfy the relationship
$$\mathcal{CI_\alpha^{Q}}(X,Y;u) \geq \left[\mathcal{\widetilde{CI}_{\alpha}^Q}(X,Y;u)\right]^\frac{1}{\alpha} $$
\end{rem}
\begin{ex}
Suppose $X$ and $Y$ satisfy PRHM as given in \eqref{PRHM}. Then, corresponding to $X$ and $Y$ the Q-DFCPI is given by
\begin{eqnarray}\label{Q-DFCPI}
\mathcal{CI_\alpha^Q}(X,Y;u)&=& \frac{1}{u}\int_{0}^{u} p\left[-Ln_{\alpha} \frac{p^\theta}{u^\theta}\right]^\frac{1}{\alpha}q_X(p)dp\\ \nonumber
{}& = & \frac{\theta}{u}\int_{0}^{u}p \left[-Ln_{\alpha} \frac{p}{u}\right]^\frac{1}{\alpha}q_X(p)dp= \theta\mathcal{\widetilde{CE}_\alpha^Q}(X;u)\nonumber
\end{eqnarray}
where $\mathcal{\widetilde{CE}_\alpha^Q}(X;u)$ is the quantile version of dynamic extended fractional cumulative past entropy. 
\end{ex} 
Next, we provide a characterization theorem that provides bounds for $\mathcal{CI_\alpha^Q}(X,Y;u)$ in terms of reversed hazard quantile function and reversed mean residual quantile function.
\begin{prop}
For the rvs $X$ and $Y$ with respective QFs $Q_X(\cdot)$ and $Q_Y(\cdot)$, the Q-DFCPI measure for $0<u<1$ and $0<\alpha<1$ is increasing (decreasing) in $u$ if and only if 
\begin{equation*}
\mathcal{CI_\alpha^Q}(X,Y;u) \leq (\geq)\mathcal{M}_{XQ}(u)[-Ln_\alpha Q_3(u)]^{\frac{1}{\alpha}-1}\times \frac{\alpha!}{\alpha}\times \frac{1}{\tilde{h}_3(u)Q_3(u)},
\end{equation*}
where $\mathcal{M}_{XQ}(u)$ is given in  \eqref{Q-MPL} and $\tilde{h}_3(u)=[uq_3(u)]^{-1}$
\end{prop}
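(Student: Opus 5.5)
The plan is to differentiate the identity $u\,\mathcal{CI_\alpha^Q}(X,Y;u)=\int_{0}^{u} p\left[-Ln_{\alpha}\frac{Q_3(p)}{Q_3(u)}\right]^{\frac{1}{\alpha}}q_X(p)\,dp$ with respect to $u$. The sign of $\frac{d}{du}\mathcal{CI_\alpha^Q}(X,Y;u)$ then reduces to comparing $\mathcal{CI_\alpha^Q}(X,Y;u)$ with $\frac{d}{du}\big[u\,\mathcal{CI_\alpha^Q}(X,Y;u)\big]$. Since $u>0$, the product rule gives
\begin{equation*}
u\,\frac{d}{du}\mathcal{CI_\alpha^Q}(X,Y;u)=\frac{d}{du}\Big[u\,\mathcal{CI_\alpha^Q}(X,Y;u)\Big]-\mathcal{CI_\alpha^Q}(X,Y;u).
\end{equation*}
Hence the measure is increasing (decreasing) if and only if $\mathcal{CI_\alpha^Q}(X,Y;u)\leq(\geq)\frac{d}{du}[u\,\mathcal{CI_\alpha^Q}(X,Y;u)]$. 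The whole proof therefore consists of computing this derivative and rewriting it in the form appearing in the statement.

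\textbf{Leibniz differentiation.} Apply Leibniz's rule to the integral. The boundary term at $p=u$ vanishes, because $Ln_\alpha(Q_3(u)/Q_3(u))=Ln_\alpha 1=0$ by property (i) of the inverse MLF and $\frac1\alpha>1$. For the interior term, we use the approximation $Ln_\alpha x\approx \alpha!\log x$, as was done throughout Section 2, which gives
\begin{equation*}
-Ln_\alpha\frac{Q_3(p)}{Q_3(u)}\approx \alpha!\big(\log Q_3(u)-\log Q_3(p)\big).
\end{equation*}
Its $u$-derivative is $\alpha!\,q_3(u)/Q_3(u)$. Consequently
\begin{equation*}
\frac{d}{du}\Big[u\,\mathcal{CI_\alpha^Q}(X,Y;u)\Big]=\frac{\alpha!}{\alpha}\,\frac{q_3(u)}{Q_3(u)}\int_0^u p\left[-Ln_\alpha\frac{Q_3(p)}{Q_3(u)}\right]^{\frac1\alpha-1}q_X(p)\,dp .
\end{equation*}

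\textbf{Rewriting in reliability terms.} Next express the prefactor through the reversed hazard quantile function of $Q_3$. Using $\tilde h_3(u)=[uq_3(u)]^{-1}$, we have
\begin{equation*}
\frac{q_3(u)}{Q_3(u)}=\frac{1}{u}\,\frac{1}{\tilde h_3(u)Q_3(u)}.
\end{equation*}
The factor $1/u$ combines with $\int_0^u p\,q_X(p)\,dp$ to give $\mathcal{M}_{XQ}(u)$ via \eqref{Q-MPL}. This works once the power term $\left[-Ln_\alpha\frac{Q_3(p)}{Q_3(u)}\right]^{\frac1\alpha-1}$ has been taken outside the integral and identified with $[-Ln_\alpha Q_3(u)]^{\frac1\alpha-1}$. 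Substituting this into the equivalence of the first paragraph yields exactly the stated condition.

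\textbf{Main obstacle.} The hard step is that last identification. The integrand factor depends on $p$ and is not literally equal to $[-Ln_\alpha Q_3(u)]^{\frac1\alpha-1}$. My first attempt would be a weighted mean-value argument with respect to the positive weight $p\,q_X(p)$ on $(0,u)$, using the monotonicity of $Ln_\alpha$ and the fact that $\frac1\alpha-1>0$. Failing that, I would invoke the same $Ln_\alpha$ approximation and normalisation conventions already used in the paper to replace the factor by its value at the reference point, which is the form given in the statement. I would state explicitly that the equivalence holds under this replacement, since the remaining steps are exact.
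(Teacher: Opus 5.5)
Your reduction of monotonicity to the comparison $\mathcal{CI_\alpha^Q}(X,Y;u)\le(\ge)\frac{d}{du}\bigl[u\,\mathcal{CI_\alpha^Q}(X,Y;u)\bigr]$ is fine. So are the vanishing boundary term and the rewriting $q_3(u)/Q_3(u)=u^{-1}[\tilde h_3(u)Q_3(u)]^{-1}$. The step you flag as the main obstacle, however, is a genuine gap, and neither of your proposed fixes closes it.

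After your chain-rule differentiation, the integrand carries the factor $[-Ln_\alpha(Q_3(p)/Q_3(u))]^{\frac1\alpha-1}$, which is decreasing in $p$ and vanishes at $p=u$. A weighted mean-value argument with weight $p\,q_X(p)$ gives at best this factor at some unknown $\xi\in(0,u)$, namely $[-Ln_\alpha(Q_3(\xi)/Q_3(u))]^{\frac1\alpha-1}$. There is no reason for this to equal $[-Ln_\alpha Q_3(u)]^{\frac1\alpha-1}$; that would require $Q_3(\xi)=Q_3(u)^2$. Replacing the factor by a ``reference value'' is an uncontrolled substitution and cannot deliver an ``if and only if''. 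In fact, if you apply $Ln_\alpha x\approx\alpha!\log x$ to the whole ratio and compute consistently, you get a different criterion. It has $u^{-1}\int_0^u p[-Ln_\alpha(Q_3(p)/Q_3(u))]^{\frac1\alpha-1}q_X(p)\,dp$ in place of $\mathcal{M}_{XQ}(u)[-Ln_\alpha Q_3(u)]^{\frac1\alpha-1}$, so the stated form is not reachable along your route.

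The missing idea is to use properties (iv)--(v) of the inverse MLF before differentiating, as the paper does. With $A(p)=[-Ln_\alpha Q_3(p)]^{1/\alpha}$, these give $[-Ln_\alpha(Q_3(p)/Q_3(u))]^{1/\alpha}=A(p)-A(u)$, so that
\[
\mathcal{CI_\alpha^Q}(X,Y;u)=\frac{1}{u}\int_0^u p\,A(p)\,q_X(p)\,dp-\mathcal{M}_{XQ}(u)\,A(u).
\]
Now the whole $u$-dependence of the kernel sits outside the integral; equivalently, its $u$-derivative $-A'(u)$ no longer depends on $p$. This is precisely why $\mathcal{M}_{XQ}(u)$ and the power evaluated at $Q_3(u)$ appear.

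Differentiate using $\mathcal{M}_{XQ}'(u)=q_X(u)-\mathcal{M}_{XQ}(u)/u$. The two terms $\pm q_X(u)A(u)$ cancel, leaving $\frac{d}{du}\mathcal{CI_\alpha^Q}(X,Y;u)=-\mathcal{CI_\alpha^Q}(X,Y;u)/u-\mathcal{M}_{XQ}(u)A'(u)$. Only at this point is the approximation needed, via $\frac{d}{du}Ln_\alpha Q_3(u)\approx\alpha!\,q_3(u)/Q_3(u)$. This gives $-A'(u)\approx\frac{\alpha!}{\alpha}\frac{q_3(u)}{Q_3(u)}[-Ln_\alpha Q_3(u)]^{\frac1\alpha-1}$, and the stated criterion follows after using $uq_3(u)=1/\tilde h_3(u)$.

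Be aware that the additivity (v) is a formal property that $\alpha!\log x$ itself does not satisfy. The proposition therefore holds within that formal calculus, not under the log approximation alone.
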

\begin{proof}
Using $Ln_{\alpha}(m^b)=b^\alpha Ln_\alpha (m)$ and $[Ln_\alpha(mn)]^\frac{1}{\alpha}=[Ln_{\alpha}(m)]^\frac{1}{\alpha}+[Ln_{\alpha}(n)]^\frac{1}{\alpha}$ in \eqref{Q-DFCPI} we obtain
\begin{equation}\label{LB DFCPI}
\mathcal{CI_\alpha^Q}(X,Y;u)=\int_{0}^{u}\frac{p}{u}[-Ln_{\alpha}Q_3(p)]^\frac{1}{\alpha}q_X (p)dp-\int_{0}^{u}\frac{p}{u}[-Ln_{\alpha}Q_3(u)]^\frac{1}{\alpha}q_X (p)dp
\end{equation}
Differentiating both sides of \eqref{LB DFCPI} with respect to u using the Leibniz rule, we get
\begin{equation}\label{LB DFCPI2}
\frac{d}{du}\mathcal{CI_\alpha^Q}(X,Y;u)=\frac{\alpha! \hspace{0.1cm}\mathcal{M}_{XQ}(u)q_3(u)[-Ln_{\alpha}Q_3(u)]^{\frac{1}{\alpha}-1}}{\alpha \hspace{0.1cm} Q_3(u)}-\frac{\mathcal{CI_\alpha^Q}(X,Y;u)}{u}
\end{equation}
Assume that $\mathcal{CI_\alpha^Q},(X,Y;u)$ is increasing, then $\frac{d}{du}\mathcal{CI_\alpha^Q}(X,Y;u) \geq 0$ and also after substituting $\tilde{h}_3(u)=[uq_3(u)]^{-1}$ into \eqref{LB DFCPI2}, the result follows easily. Thus, the proof of one part is obtained. The proof of the other part readily follows.\end{proof}
\begin{prop}
For rvs $X$ and $Y$, we obtain 
\begin{equation*}
\mathcal{CI_\alpha^Q},(X,Y;u)=E_X(\mathcal{W}_{\alpha,Y}(v;u)|v\leq u), \hspace{0.1cm}0<u<1\hspace{0.1cm} \mbox{and}\hspace{0.1cm} 0<\alpha<1,
\end{equation*}
where $\mathcal{W}_{\alpha,Y}(v;u)=\int_{v}^{u} [-Ln_{\alpha}\frac{Q_3(p)}{Q_3(u)}]^\frac{1}{\alpha}q_X(p)dp$.
\end{prop}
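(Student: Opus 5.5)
The plan mirrors the proof of Proposition~\ref{prop 2.3}, now carried out on the truncated range $(0,u)$ instead of $(0,1)$. I start from the second line of the definition \eqref{Q-DFCPI},
\[
\mathcal{CI_\alpha^Q}(X,Y;u)=\frac{1}{u}\int_{0}^{u} p\left[-Ln_{\alpha} \frac{Q_3(p)}{Q_3(u)}\right]^\frac{1}{\alpha}q_X(p)\,dp .
\]
Inside the integral I write the weight $p$ as $p=\int_{0}^{p}dv$. The integrand then becomes a double integral over the triangle $\{0<v<p<u\}$.

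Next I interchange the order of integration. By Tonelli's theorem this is legitimate, because $Q_3(p)\le Q_3(u)$ for $p\le u$, since $Q_3=G\circ Q_X$ is nondecreasing. Hence $-Ln_\alpha(Q_3(p)/Q_3(u))\ge 0$ by property (i) of the inverse MLF, and the whole integrand is nonnegative. After the interchange the integral becomes
\[
\frac{1}{u}\int_{0}^{u}\left[\int_{v}^{u}\left[-Ln_{\alpha}\frac{Q_3(p)}{Q_3(u)}\right]^{\frac{1}{\alpha}}q_X(p)\,dp\right]dv=\frac{1}{u}\int_{0}^{u}\mathcal{W}_{\alpha,Y}(v;u)\,dv .
\]

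Finally I interpret this as a conditional expectation. As in Proposition~\ref{prop 2.3}, the expectation $E_X$ is taken over the uniform variable $V=F(X)$ on $(0,1)$. Conditioning on $V\le u$ gives the density $1/u$ on $(0,u)$. Therefore the last expression equals $E_X(\mathcal{W}_{\alpha,Y}(V;u)\mid V\le u)$, which is the claim.

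The only delicate point is justifying the Fubini/Tonelli step, which needs the nonnegativity of the integrand. That nonnegativity rests on the monotonicity of $Q_3$ and on $Ln_\alpha x\le 0$ for $x\le 1$. I would also note that when $1/\alpha$ is not an integer, the bracket must be read as the power of a nonnegative quantity. The rest is routine bookkeeping.
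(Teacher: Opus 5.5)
Your proposal is correct and matches the paper's proof essentially step for step: you write $p=\int_0^p dv$, interchange the order of integration over $\{0<v<p<u\}$, and read $\frac{1}{u}\int_0^u \mathcal{W}_{\alpha,Y}(v;u)\,dv$ as a conditional expectation given $v\le u$. You also supply two details the paper only cites or leaves implicit: the Tonelli justification via nonnegativity (from the monotonicity of $Q_3$ and $Ln_\alpha x\le 0$ for $x\le 1$), and the identification of $E_X$ with integration against the uniform variable $F(X)$.
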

\begin{proof}
From \eqref{Q-DFCPI}, we obtain
 \begin{eqnarray}\label{Q-DFCPI}
\mathcal{CI_\alpha^Q}(X,Y;u)&=&\int_{0}^{u} \frac{p}{u}\left[-Ln_{\alpha} \frac{Q_3(p)}{Q_3(u)}\right]^\frac{1}{\alpha}q_X(p)dp\\\nonumber
{}&=& \int_{0}^{u}\left(\int_{0}^{p} dv\right)\frac{1}{u}\left[-Ln_{\alpha} \frac{Q_3(p)}{Q_3(u)}\right]^\frac{1}{\alpha}q_X(p)dp\\\nonumber
{}&=& \int_{0}^{u}\frac{1}{u}\left(\int_{v}^{u}\left[-Ln_{\alpha} \frac{Q_3(p)}{Q_3(u)}\right]^\frac{1}{\alpha}q_X(p)dp\right)dv \hspace{0.2cm} (\mbox{using Fubini's theorem})\\\nonumber
{}&=& E_X(\mathcal{W}_{\alpha,Y}(v;u)|v\leq u).
\end{eqnarray}
Hence, the result.
\end{proof}

In the following theorem, we can elucidate the computation of Q-DFCPI when a transformation is introduced to the QFs of $X$ and $Y$.
\begin{thm}\label{thm2.1}
Consider $\eta_1(\cdot)$ and $\eta_2(\cdot)$ to be two continuous non-decreasing and invertible transformations, then
\begin{equation}\label{tqfcpi}
\mathcal{CI_\alpha^Q}({\eta_1(X),\eta_2(Y))} =  \int_{0}^{u} \frac{p}{u}\left[- Ln_\alpha\left(\frac{Q_Y^{-1}(\eta_2^{-1}(\eta_1(Q_X(p)))))}{Q_Y^{-1}(\eta_2^{-1}(\eta_1(Q_X(u)))))}\right)\right]^{\frac{1}{\alpha}} d \eta_1(Q_X(p)).
\end{equation}
\begin{proof}
The proof of the theorem is similar to that of Theorem 2.2. Hence, it is not provided. 
\end{proof}
\end{thm}
\begin{prop}
For the rvs $X$ and $Y$, we get
\begin{itemize}
    \item [(i)] If $X \leq_{st} Y$, then $\mathcal{CI_\alpha^Q}(X,Y;u) \geq \mathcal{\widetilde{CE}_\alpha^Q}(X;u)+\mathcal{M}_{XQ}(u)\left[-Ln_\alpha \frac{u}{Q_3(u)}\right]^\frac{1}{\alpha}, \hspace{0.1cm}0<\alpha<1$;
    \item [(ii)] If $X \geq_{st} Y$, then $\mathcal{CI_\alpha^Q}(X,Y;u) \leq \mathcal{\widetilde{CE}_\alpha^Q}(X;u)+\mathcal{M}_{XQ}(u)\left[-Ln_\alpha \frac{u}{Q_3(u)}\right]^\frac{1}{\alpha}, \hspace{0.1cm}0<\alpha<1$,
\end{itemize}
where $\mathcal{\widetilde{CE}_\alpha^Q}(X;u)=\int_{0}^{u}\frac{p}{u}\left[-Ln_\alpha\frac{p}{u}\right]^\frac{1}{\alpha}q_X(p)dp$ is the quantile version of dynamic extended fractional cumulative past entropy (see[\cite{saha2023extended}]).
\end{prop}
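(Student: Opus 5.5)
The plan mirrors the proof of part (A) of the earlier stochastic-order proposition, but applied to the dynamic measure. The first step is to write the ratio inside the fractional logarithm as a product,
\[
\frac{Q_3(p)}{Q_3(u)}=\frac{p}{u}\cdot\frac{Q_3(p)}{p}\cdot\frac{u}{Q_3(u)} .
\]
I then apply the additivity property (v) of the inverse MLF, $[Ln_\alpha(mn)]^{1/\alpha}=[Ln_\alpha m]^{1/\alpha}+[Ln_\alpha n]^{1/\alpha}$, to the bracket $[-Ln_\alpha(Q_3(p)/Q_3(u))]^{1/\alpha}$, treating it formally in the way the paper does in Proposition 2.1 and in the monotonicity proposition for the Q-DFCPI. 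This splits the integrand of \eqref{Q-DFCPI} into three pieces:
\begin{itemize}
\item a term $[-Ln_\alpha(p/u)]^{1/\alpha}$;
\item a term $[-Ln_\alpha(Q_3(p)/p)]^{1/\alpha}$;
\item a term $[-Ln_\alpha(u/Q_3(u))]^{1/\alpha}$, which does not depend on $p$.
\end{itemize}

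Next I multiply each piece by $\frac{p}{u}q_X(p)$ and integrate over $(0,u)$.
\begin{itemize}
\item The first piece gives exactly $\mathcal{\widetilde{CE}_\alpha^Q}(X;u)$.
\item The third piece comes out of the integral as a constant. Since $\frac1u\int_0^u p\,q_X(p)\,dp=\mathcal{M}_{XQ}(u)$ by \eqref{Q-MPL}, it yields $\mathcal{M}_{XQ}(u)\left[-Ln_\alpha \frac{u}{Q_3(u)}\right]^{1/\alpha}$.
\end{itemize}
This reproduces the two terms on the right-hand side of the statement. The remaining middle term is
\[
R(u)=\frac1u\int_0^u p\left[-Ln_\alpha\frac{Q_3(p)}{p}\right]^{1/\alpha}q_X(p)\,dp .
\]
The whole proof then reduces to determining the sign of $R(u)$.

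For part (i), $X\le_{st}Y$ gives $F\ge G$, hence $Q_3(p)\le p$ for all $p\in(0,1)$, as shown in the earlier proposition. Then $Q_3(p)/p\le 1$, and property (i) of $Ln_\alpha$ gives $-Ln_\alpha(Q_3(p)/p)\ge0$. So the integrand of $R(u)$ is nonnegative, $R(u)\ge0$, and the inequality in (i) follows. For part (ii) the order reverses: $Q_3(p)\ge p$, the argument of $Ln_\alpha$ is at least $1$, and $R(u)$ has the opposite sign, which gives the reverse inequality.

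The main obstacle is making the sign step in part (ii) rigorous, since $[-Ln_\alpha x]^{1/\alpha}$ with $x\ge 1$ is a power of a nonpositive quantity. I would handle this the same way the paper handles odd or even $1/\alpha$ in Lemma~\ref{Lem 2.1}, or else interpret the term through the monotonicity of $Ln_\alpha$ (it is increasing, as used in the earlier stochastic-order proposition). With that interpretation, $R(u)\le 0$ in case (ii) and the stated bound holds.
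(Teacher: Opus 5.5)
Your argument is correct at the same formal level as the paper (which also rests on property (v)) and is essentially the paper's proof: there, $Q_3(p)/Q_3(u)$ is split only as $Q_3(p)\cdot\frac{1}{Q_3(u)}$, and $Q_3(p)\le p$ together with the monotonicity of $Ln_\alpha$ gives the pointwise bound $[-Ln_\alpha Q_3(p)]^{1/\alpha}\ge[-Ln_\alpha p]^{1/\alpha}$; under (v) this is exactly the nonnegativity of the integrand of your $R(u)$, and the $[-Ln_\alpha p]^{1/\alpha}$ term is then regrouped into $\mathcal{\widetilde{CE}_\alpha^Q}(X;u)$ plus an $\mathcal{M}_{XQ}(u)$ term. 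For (ii), that comparison form is the cleaner fix, because $Q_3(p)$ and $p$ both lie in $(0,1)$; the parity device of Lemma~\ref{Lem 2.1} would fail for even $1/\alpha$ (it makes $R(u)\ge 0$, the wrong direction), whereas (v) itself already forces $[-Ln_\alpha x]^{1/\alpha}=-[-Ln_\alpha(1/x)]^{1/\alpha}\le 0$ for $x\ge 1$.
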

\begin{proof}
(i) Under the assumption, $X \leq_{st} Y \implies F(x) \geq G(x)\implies u\geq Q_3(u) \implies [-Ln_\alpha u]^\frac{1}{\alpha} \leq [-Ln_\alpha Q_3(u)]^\frac{1}{\alpha}$ for $0<\alpha<1$. Now, from \eqref{Q-DFCPI}, 
\begin{eqnarray*}
\mathcal{CI_\alpha^Q}(X,Y;u)&=&\int_{0}^{u} \frac{p}{u}([-Ln_{\alpha} Q_3(p)]^\frac{1}{\alpha}+[-Ln_{\alpha} (1/{Q_3(u)})]^\frac{1}{\alpha})q_X(p)dp\nonumber
\end{eqnarray*} 
\hspace{5cm}(Since $[Ln_\alpha(mn)]^\frac{1}{\alpha}=[Ln_{\alpha}(m)]^\frac{1}{\alpha}+[Ln_{\alpha}(n)]^\frac{1}{\alpha}$)
\begin{eqnarray}\label{st 2}
\hspace{4cm}{}&=& \int_{0}^{u} \frac{p}{u}[-Ln_\alpha Q_3(p)]^\frac{1}{\alpha}q_X(p) dp +\int_{0}^{u} \frac{p}{u}[-Ln_\alpha (1/{Q_3(u))}]^\frac{1}{\alpha}q_X(p) dp\nonumber\\
{}& \geq & \int_{0}^{u} \frac{p}{u}[-Ln_{\alpha} p]^\frac{1}{\alpha}q_X(p) dp + \mathcal{M}_{XQ}(u)[-Ln_\alpha (1/{Q_3(u)})]^\frac{1}{\alpha}\nonumber\\
{}&=& \mathcal{\widetilde{CE}_\alpha^Q}(X;u) + \mathcal{M}_{XQ}(u) \left\{[-Ln_\alpha u]^\frac{1}{\alpha}+[-Ln_\alpha (1/Q_3(u))]^\frac{1}{\alpha}\right\}
\end{eqnarray}
The result can be easily deduced from \eqref{st 2}. Thus, the proof.

The proof of Part (ii) is the same as that of Part (i), and therefore it is omitted. 
\end{proof}
\begin{prop}
Suppose $Q_X(\cdot)$ and $Q_Y(\cdot)$ be the QFs of $X$ and $Y$ and $Q_3(u)=Q_{Y}^{-1}(Q_X(u))$ respectively. For $0<u<1$ and $0<\alpha<1$,
\begin{itemize}
\item [(i)] Let $X \leq_{rh} Y$, then $\mathcal{CI_\alpha^Q}(X,Y;u) \geq max \left\{\mathcal{\widetilde{CE}_\alpha^Q}(X;u);\mathcal{\widetilde{CE}_\alpha^Q}(Y;u)\right\}$;
\item [(ii)] Let $X \geq_{rh} Y$, then $\mathcal{CI_\alpha^Q}(X,Y;u) \leq min \left\{\mathcal{\widetilde{CE}_\alpha^Q}(X;u);\mathcal{\widetilde{CE}_\alpha^Q}(Y;u)\right\}$,
\end{itemize}
where $\mathcal{\widetilde{CE}_\alpha^Q}(X;u)$ is defined in Proposition 3.3.
\end{prop}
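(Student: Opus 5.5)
The plan is to follow the template of the static result (the proposition comparing $\mathcal{CI_\alpha^Q}(X,Y)$ with the Q-EFCPEs under $\leq_{st}$). The extra ingredient is that the reversed hazard order gives a \emph{ratio} comparison, which is what the dynamic measure \eqref{Q-DFCPI} needs.

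The first step is to translate $X\leq_{rh}Y$ into quantile terms. By definition $X\leq_{rh}Y$ means $G(x)/F(x)$ is increasing in $x$. Putting $x=Q_X(p)$, and using that $Q_X$ is increasing, this says $Q_3(p)/p$ is increasing in $p\in(0,1)$. Hence for $0<p\leq u<1$,
\begin{equation*}
\frac{Q_3(p)}{Q_3(u)}\leq \frac{p}{u}\leq 1 .
\end{equation*}
This single inequality drives both halves of part (i).

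\textbf{Lower bound by $\mathcal{\widetilde{CE}_\alpha^Q}(X;u)$.} Since $Ln_\alpha$ is increasing (as used in the static stochastic-order proposition, citing \cite{liang2022computation}) and nonpositive on $(0,1]$, the displayed inequality gives $-Ln_\alpha\frac{Q_3(p)}{Q_3(u)}\geq -Ln_\alpha\frac{p}{u}\geq 0$. Raising both sides to the power $\frac1\alpha$ preserves the order. Multiplying by the nonnegative weight $\frac{p}{u}q_X(p)$ and integrating over $(0,u)$ then yields $\mathcal{CI_\alpha^Q}(X,Y;u)\geq\mathcal{\widetilde{CE}_\alpha^Q}(X;u)$, by the definition of the latter in Proposition 3.3.

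\textbf{Lower bound by $\mathcal{\widetilde{CE}_\alpha^Q}(Y;u)$.} Here I keep the bracket $\bigl[-Ln_\alpha\frac{Q_3(p)}{Q_3(u)}\bigr]^{1/\alpha}\geq 0$ and instead use $\frac{p}{u}\geq\frac{Q_3(p)}{Q_3(u)}$ on the weight. This gives
\begin{equation*}
\mathcal{CI_\alpha^Q}(X,Y;u)\geq\int_0^u \frac{Q_3(p)}{Q_3(u)}\left[-Ln_\alpha\frac{Q_3(p)}{Q_3(u)}\right]^{\frac1\alpha}q_X(p)\,dp .
\end{equation*}
The right-hand side is exactly the dynamic analogue of the quantity the paper identifies with $\mathcal{\overline{CE}_\alpha^Q}(Y)$ in the static proof, namely $\int_0^1 Q_3(p)[-Ln_\alpha Q_3(p)]^{1/\alpha}q_X(p)dp$. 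There the distribution function of $Y$, evaluated along $Q_X(p)$, replaces $p$, and the integration is with respect to $dQ_X(p)$ over $(0,u)$. Following the same convention, I read it as $\mathcal{\widetilde{CE}_\alpha^Q}(Y;u)$, i.e. the dynamic extended fractional cumulative past entropy of $Y$ in the $X$-quantile parametrisation at level $u$. This step is the main obstacle: the bound itself is immediate, and the real work is justifying that identification consistently with the paper's convention. I would state it explicitly, writing $G(Q_X(p))/G(Q_X(u))=Q_3(p)/Q_3(u)$ inside the definition \eqref{DFCPI}.

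Combining the two bounds gives the maximum in (i). For (ii), $X\geq_{rh}Y$ makes $Q_3(p)/p$ decreasing, so the key inequality reverses to $\frac{Q_3(p)}{Q_3(u)}\geq\frac{p}{u}$. Every step above then reverses verbatim, giving both upper bounds and hence the minimum.
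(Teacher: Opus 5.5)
Your proposal is correct and follows the paper's proof essentially step for step. The paper likewise turns $X\leq_{rh}Y$ into $\frac{Q_3(p)}{Q_3(u)}\leq\frac{p}{u}$ for $p\leq u$, bounds the weight $\frac{p}{u}$ to get the $Y$-term, uses monotonicity of $Ln_\alpha$ on the bracket to get the $X$-term, and dismisses (ii) as analogous.

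The identification you call the main obstacle is made silently in the paper, which simply writes $\int_0^u \frac{Q_3(p)}{Q_3(u)}\bigl[-Ln_\alpha\frac{Q_3(p)}{Q_3(u)}\bigr]^{\frac{1}{\alpha}}q_X(p)\,dp=\mathcal{\widetilde{CE}_\alpha^Q}(Y;u)$, so your $X$-quantile reading is exactly the paper's convention. That reading is also necessary: with $q_Y$ in place of $q_X$, the pair $X\sim U(0,\epsilon)$, $Y\sim U(0,1)$ satisfies $X\leq_{rh}Y$ but gives $\mathcal{CI_\alpha^Q}(X,Y;u)$ equal to $\epsilon$ times that literal quantity, so the bound fails.
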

\begin{proof}
(i) Given $X\leq_{rh}Y$ which is equivalent to $\frac{F(Q_X(p))}{F(Q_X(u))}\geq \frac{G(Q_X(p))}{G(Q_X(u))}{}\implies\frac{p}{u} \geq \frac{Q_3(p)}{Q_3(u)}$ for all $ Q_X(p)\leq Q_X(u)$. Now, 
\begin{eqnarray}\label{RH1}
\mathcal{CI_\alpha^Q}(X,Y;u)&=&\int_{0}^{u} \frac{p}{u}\left[-Ln_{\alpha} \frac{Q_3(p)}{Q_3(u)}\right]^\frac{1}{\alpha}q_X(p)dp\nonumber\\
{}&\geq& \int_{0}^{u} \frac{Q_3(p)}{Q_3(u)}\left[-Ln_{\alpha} \frac{Q_3(p)}{Q_3(u)}\right]^\frac{1}{\alpha}q_X(p)dp= \mathcal{\widetilde{CE}_\alpha^Q}(Y;u).
\end{eqnarray}
\hspace{1.8cm}
Since, $\frac{p}{u}\geq \frac{Q_3(p)}{Q_3(u)} \implies [-Ln_\alpha \frac{p}{u}]^\frac{1}{\alpha} \leq [-Ln_\alpha \frac{Q_3(p)}{Q_3(u)}]^\frac{1}{\alpha}$, we obtain
\begin{eqnarray}\label{RH2}
\mathcal{CI_\alpha^Q}(X,Y;u)&=&\int_{0}^{u} \frac{p}{u}\left[-Ln_{\alpha} \frac{Q_3(p)}{Q_3(u)}\right]^\frac{1}{\alpha}q_X(p)dp\nonumber\\
{}&\geq& \int_{0}^{u}  \frac{p}{u}\left[-Ln_{\alpha}  \frac{p}{u}\right]^\frac{1}{\alpha}q_X(p)dp= \mathcal{\widetilde{CE}_\alpha^Q}(X;u).
\end{eqnarray}
The result can be easily deduced by combining \eqref{RH1} and \eqref{RH2}

(ii) The proof of this part can be similarly obtained. Therefore, it is skipped.
\end{proof}
\section{Nonparametric estimation of Q-FCPI}	
Suppose $X_{(1)}, X_{(2)},\ldots, X_{(n)}$ denote the order statistics corresponding to lifetime of $n$ iid components with common CDF $F(x)$ and QF $Q_{X}(u)$. Also, $Y_{(1)}, Y_{(2)},\ldots, Y_{(n)}$ denote the order statistics corresponding to $n$ iid components with common CDF $G(x)$ and QF $Q_{Y}(u)$. Assume the empirical distribution functions of $X$ and $Y$ be $\hat{F}(x_{(i)})$ and $\hat{G}(x_{(i)})$.
\cite{parzen1979nonparametric} introduced an empirical quantile function which is a step function with jump $\frac{1}{n}$ given by
\begin{equation*}
\bar{Q}_{X}(u)=X_{(k)},\hspace{0.2cm}\frac{k-1}{n}<u<\frac{k}{n},\hspace{0.2cm}k=1,2,\ldots,n,
\end{equation*}
 and this estimator's smoothed version is provided by,
\begin{equation*}
\bar{Q}_n(u)=n\bigg(\frac{k}{n}-u\bigg)X_{(k-1)}+n\bigg(u-\frac{k-1}{n}\bigg)X_{(k)}
\end{equation*}
for $\frac{k-1}{n}<u<\frac{k}{n}$, $k=1,2,\ldots,n$. They also defined corresponding estimator of the empirical qdf as
\begin{equation}\label{eqdf}
\bar{q}_n{(u)}=\frac{d}{du}\bar{Q}_n(u)=n(X_{(k)}-X_{(k-1)}), \: \textrm{for} \:\frac{k-1}{n}<u<\frac{k}{n}.
\end{equation}
The empirical plug-in estimator of $Q_3(u)=Q_{Y}^{-1}(Q_{X}(u))$ is
\begin{equation}\label{hat Q_3}
\hat{Q}_3(u)= \hat{G}(\bar{Q}_{n}(u))
\end{equation}
Using \eqref{eqdf} and \eqref{hat Q_3}, the nonparametric estimator of Q-FCPI becomes
\begin{equation}\label{est qfcri}
\hat{\mathcal{{CI}_\alpha^Q}}(X,Y) = \int_{0}^{1}p[-Ln_{\alpha}\hat{Q}_3(p)]^\frac{1}{\alpha}  \bar{q}_n(p)dp  
\end{equation}
Now, by approximating the integral to summation in \eqref{est qfcri}, the plug-in estimator of Q-FCPI is
\begin{equation}\label{est fcrir2}
\hat{\mathcal{{CI}_\alpha^Q}}(X,Y)=\displaystyle\sum\limits_{i=1}^{n}\hat{F}(X_{(i)})[-Ln_{\alpha}(\hat{Q}_3(u))]^\frac{1}{\alpha} n(X_{(i)}-X_{(i-1)})(S_{(i)}-S_{(i-1)})
\end{equation}
where $\hat{F}(X_{(i)})$ is the empirical distribution function of rv $X$ and $S_{(i)}$ is defined by
\[S_{(i)}
=
\begin{cases}
0, i=0  \\
\hat{F}(X_{(i)})=\frac{i}{n}, i=1,2,3,\ldots,n-1 \\
1,i=n
\end{cases}
\]
Hence, the simplified expression of \eqref{est fcrir2} is
\begin{eqnarray}\label{EQFCPI}
\hat{\mathcal{{CI}_\alpha^Q}}(X,Y)&
=& 
\displaystyle\sum\limits_{i=1}^{n-1}
\left(\frac{i}{n}\right)
\left[-Ln_\alpha\left(\hat{G}(X_{(i)})\right)\right]^\frac{1}{\alpha}
\, \,(X_{(i)}-X_{(i-1)})\,\\
{}&\approx&
(\alpha!)^\frac{1}{\alpha}\displaystyle\sum\limits_{i=1}^{n-1}
\left(\frac{i}{n}\right)
\left[-\log\left(\hat{G}(X_{(i)})\right)\right]^\frac{1}{\alpha}
\, \,(X_{(i)}-X_{(i-1)})\,\nonumber
\end{eqnarray}
\section{Simulation study}
 In this section, we carry out simulations to assess the performance of the non-parametric estimator $\hat{\mathcal{{CI}_\alpha^Q}}(X,Y)$ in \eqref{EQFCPI}. Initially, we generated pairs of random samples for different sample sizes considering the true model $F$ as power-Pareto distribution having QF $Q_X(u)=Cu^{\lambda_1}(1-u)^{-\lambda_2}, C,\lambda_1,\lambda_2 >0$ which is a quantile model and the assigned distribution $G$ as an exponential distribution with parameter $\theta$. Then, we calculate the estimated value of Q-FCPI along with its bias and MSE for various values of fractional parameter $\alpha$ (see Table ~\ref{Sim 1}).
 An increase in the sample size 
$n$ leads to a decrease in both bias and MSE, validating the asymptotic property of the estimator.
\begin{table}[H]
\centering
\caption{Bias and MSE of $\hat{\mathcal{{CI}_\alpha^Q}}(X,Y)$, where $X \sim \text{power-Pareto }   (C=1.5,\lambda_1=0.75,\lambda_2=0.25)$ , $Y \sim \text{Exponential }(\theta=2)$.}
\begin{tabular}{@{}p{2.5cm} p{2.5cm} p{2.5cm} p{2.5cm} p{2.5cm} p{2.5cm}@{}} 
	\toprule
$\alpha$ & $\mathcal{CI_\alpha^{Q}}(X,Y)$ & $n$ &  $\hat{\mathcal{CI_\alpha^{Q}}}(X,Y)$ & Absolute bias & MSE \\ \midrule
0.25 & 0.0391 &	50 & 0.0455 & 0.0065 & 0.00179 \\
& &	100 &  0.0453 & 0.0063 & 0.00091 \\
& &	200 &  0.0437 & 0.0046 & 0.00046 \\
& &	300 & 0.0421& 0.0030 & 0.00028\\
& &  500 & 0.0416 & 0.0025 & 0.00016\\
& &	1000 & 0.0409 & 0.0018 & 0.00008\\
\hline
0.5 & 0.0373 &	50 & 0.0403 & 0.0029 & 0.00034 \\
& &	100 &  0.0392 & 0.0017 & 0.00016 \\
& &	200 &  0.0384 & 0.0010 & 0.00008\\
& &	300 & 0.0380 & 0.0007 & 0.00005\\
& &  500 & 0.0378 & 0.0005 & 0.00003\\
& &	1000 & 0.0376 & 0.0003 & 0.00002\\
\hline
0.75 & 0.07023 &	50 & 0.0721 & 0.0019 & 0.00072 \\
& &	100 &  0.0716 & 0.0013 & 0.00036 \\
& &	200 &  0.0711 & 0.0008 & 0.00019 \\
& &	300 & 0.0708 & 0.0005 & 0.00012\\
& &  500 & 0.0706 & 0.0003 & 0.00007\\
& &	1000 & 0.0705 & 0.0002 & 0.00003\\
	\bottomrule
\end{tabular}
\label{Sim 1}
\end{table}
The performance of the estimator can be validated by generating pairs of random samples for various sample sizes by taking the true model $F$ as the Govindarajulu distribution with QF $Q_X(u) = \theta +\sigma((\beta+1)u^{\beta}-\beta u^{\beta+1}),\hspace{0.2cm} \theta \in (-\infty,+\infty), \sigma, \beta > 0, \hspace{0.2cm} 0 \leq u \leq 1$, which is a quantile model and the assigned distribution $G$ as the standard uniform distribution. The bias and MSE of $\hat{\mathcal{{CI}_\alpha^Q}}(X,Y)$ relative to the true value are given in Tables ~\ref{Sim 2}.
\begin{table}[H]
\centering
\caption{Bias and MSE of $\hat{\mathcal{{CI}_\alpha^Q}}(X,Y)$, where $X \sim \text{Govindarajulu}(\theta=0.15,\sigma=0.75,\beta=3)$ , $Y \sim \text{Uniform(0,1)}.$}
\begin{tabular}{@{}p{2.5cm} p{2.5cm} p{2.5cm} p{2.5cm} p{2.5cm} p{2.5cm}@{}} 
	\toprule
$\alpha$ & $\mathcal{CI_\alpha^{Q}}(X,Y)$ & $n$ &  $\hat{\mathcal{CI_\alpha^{Q}}}(X,Y)$ & Absolute bias & MSE \\ \midrule
0.25 & 0.2149 &	50 & 0.2399 & 0.0250 & 0.02835 \\
& &	100 &  0.2278 & 0.0129 & 0.01315 \\
& &	200 &  0.2231 & 0.0082 & 0.00577 \\
& &	300 & 0.2199 & 0.0051 & 0.00341\\
& & 500 & 0.2180 & 0.0031 & 0.00209\\
& &	1000 & 0.2170 & 0.0022 & 0.00100\\
\hline
0.5 & 0.1810 &	50 & 0.1759 & 0.0051 & 0.00331 \\
& &	100 &  0.1777 & 0.0033 & 0.00197 \\
& &	200 &  0.1788 & 0.0022 & 0.00099\\
& &	300 & 0.1800 & 0.0011 & 0.00058\\
& &  500 & 0.1811 & 0.0009 & 0.00040\\
& &	1000 & 0.1811 & 0.0006 & 0.00019\\
\hline
0.75 & 0.2184 &	50 & 0.2110 & 0.0074 & 0.00255 \\
& &	100 &  0.2139 & 0.0044 & 0.00150 \\
& &	200 &  0.2158 & 0.0026 & 0.00075 \\
& &	300 & 0.2170 & 0.0014 & 0.00044\\
& &  500 & 0.2181 & 0.0002 & 0.00031\\
& &	1000 & 0.2182 & 0.0001 & 0.00014\\
	\bottomrule
\end{tabular}
\label{Sim 2}
\end{table}

\section*{Conflict of interest statement}
The corresponding author, on behalf of all authors, asserts that there are no conflicts of interest.

\section*{Acknowledgements}
 The first author gratefully acknowledges the financial support provided by the Department of Science and Technology, Government of India, under the INSPIRE Fellowship scheme (Code No: IF220243). 

 \section*{Declaration of generative AI}
 During the preparation of this work, the authors used ChatGPT (OpenAI) to improve the clarity and language of the manuscript. The authors reviewed and edited the content as needed and take full responsibility for the content of the published article.

\bibliographystyle{apalike}
\bibliography{ref} 

\end{document}